\documentclass[12pt,reqno]{amsart}
\usepackage{amsmath}
\usepackage{amsfonts, amssymb, euscript, mathrsfs}
\usepackage{amsthm, upref}
\usepackage{graphicx}
\usepackage[usenames, dvipsnames]{color}

\numberwithin{equation}{section}
\allowdisplaybreaks
\usepackage{hyperref}
\newcommand*{\Cdot}{{\raisebox{-0.5ex}{\scalebox{1.8}{$\cdot$}}}} 

\usepackage{tikz}
\usepackage{enumerate}
\usepackage[paper=letterpaper,margin=0.795in]{geometry}
\usepackage{aliascnt}
\usepackage{hyperref}
\usepackage{verbatim}

\renewcommand{\tilde}{\widetilde}

\newcommand{\MZ}{\mathbb{Z}}

\newcommand{\BR}{\mathbb{R}}

\newcommand{\SL}{\sum\limits}

\newcommand{\al}{\alpha}

\newcommand{\La}{\Lambda}
\newcommand{\ME}{\mathbf E}

\newcommand{\CF}{\mathcal F}

\newcommand{\CU}{\mathcal U}
\newcommand{\MP}{\mathbf P}

\newcommand{\CN}{\mathcal N}

\newcommand{\Oa}{\Omega}

\renewcommand{\phi}{\varphi}

\newcommand{\eps}{\varepsilon}
\newcommand{\la}{\lambda}

\newcommand{\ol}{\overline}

\renewcommand{\comment}[1]{}

\newcommand{\mP}{\mathbf{p}}
\newcommand{\md}{\mathrm{d}}

\DeclareMathOperator{\Var}{Var}

\DeclareMathOperator{\Exp}{Exp}

\begin{document}

\theoremstyle{plain}
\newtheorem{thm}{Theorem}[section]
\newtheorem*{thmnonumber}{Theorem}
\newtheorem{lemma}[thm]{Lemma}
\newtheorem{prop}[thm]{Proposition}
\newtheorem{cor}[thm]{Corollary}
\newtheorem{open}[thm]{Open Problem}
\newtheorem{conj}[thm]{Conjecture}

\theoremstyle{definition}
\newtheorem{defn}{Definition}
\newtheorem{asmp}{Assumption}
\newtheorem{notn}{Notation}
\newtheorem{prb}{Problem}

\theoremstyle{remark}
\newtheorem{rmk}{Remark}
\newtheorem{exm}{Example}
\newtheorem{clm}{Claim}

\author{Andrey Sarantsev and Li-Cheng Tsai}

\title[Stationary Gap Distributions]
{Stationary Gap Distributions for\\
Infinite Systems of Competing Brownian Particles}

\address[A Sarantsev]{
	Department of Statistics and Applied Probability,
	\newline\hphantom{\ \ A Sarantsev}
	University of California, Santa Barbara}
\email{sarantsev@pstat.ucsb.edu}

\address[L-C Tsai]{Department of Mathematics, Columbia University}
\email{lctsai.math@gmail.com}

\keywords{competing Brownian particles, infinite Atlas model, stationary distribution, gap process}

\subjclass[2010]{60H10, 60J60, 60K35}

\begin{abstract}
Consider the infinite Atlas model: a semi-infinite collection of particles driven by independent standard Brownian motions with zero drifts, except for the 
bottom-ranked particle which receives unit drift.
%
We derive a continuum one-parameter family of product-of-exponentials stationary gap distributions, with exponentially \emph{growing} density at infinity. 
This result shows that there are infinitely many stationary gap distributions for the Atlas model,
and hence resolves a conjecture of Pal and Pitman (2008) \cite{PP2008} in the negative.
This result is further generalized for infinite systems of competing Brownian particles with generic rank-based drifts.
\end{abstract}

\maketitle
\thispagestyle{empty}
\section{Introduction and Main Results} 
Consider a system of infinitely many Brownian particles on the real line: $X_i(t)$, $i = 1, 2, \ldots$, $t \ge 0$. Assume we can rank them from bottom upward at any time $t \ge 0$: $X_{(1)}(t) \le X_{(2)}(t) \le \ldots$, and they satisfy the following system of SDEs:
\begin{equation}
	\label{eq:infinite-Atlas}
	\md X_i(t) = 1\left(X_i(t) = X_{(1)}(t)\right)\md t + \md W_i(t),\ \ i = 1, 2, \ldots,
\end{equation}
where $W_1, W_2, \ldots$ denote independent Brownian motions.
In plain English, the bottom particle moves as a Brownian motion with drift one, and all other particles move as driftless Brownian motions. This system of Brownian particles is called the \emph{infinite Atlas model}, for the bottom particle supporting all other particles ``on its shoulders'', as the ancient Atlas hero. 

\subsection{Infinite systems of competing Brownian particles}

Although the main interest of our work is the infinite Atlas model \eqref{eq:infinite-Atlas}, our result can be naturally generalized to more general systems of competing Brownian particles. 
In this subsection, we rigorously define these infinite systems. 
Finite systems of competing Brownian particles are defined very similarly in Section~\ref{sect:finite}. 

Letting $\mathbb{Z}_{>0} := \{1, 2, \ldots\}$, $\BR_+ := [0, \infty)$,
we adopt the notations $ \BR^\infty := \{(x_1,x_2,\ldots)|x_i\in\BR\} $ and 
$ \BR^\infty_+ := \{(z_1,z_2,\ldots)|z_i\in\BR_+\} $ for infinite dimensional spaces.
We say an infinite sequence $ x = (x_i)_{i \ge 1} \in\BR^\infty $ is \emph{rankable} if there exists a ranking permutation $ p: \mathbb{Z}_{>0} \to \mathbb{Z}_{>0} $ 
such that $ x_{p(i)} \leq x_{p(j)} $, for all $ i < j \in \mathbb{Z}_{>0} $.
Not every $ x\in\BR^\infty $ is rankable;
for example, the sequence $ x:=(x_n=\frac{1}{n})_{n=1}^\infty $ is not rankable.
To ensure that such a ranking permutation is unique,
we resolve ties in lexicographic order: if $x_{p(i)} = x_{p(j)}$ for $i < j$, then $\mP_x(i) < \mP_x(j)$.
We let $ \mP_x(\Cdot): \mathbb{Z}_{>0} \to \mathbb{Z}_{>0} $
denote the unique ranking permutation for a rankable $ x $.

Hereafter,
\emph{standard Brownian motion} refers to
a one-dimensional Brownian motion with zero drift and unit diffusion coefficient.
Throughout this paper, we operate on a filtered probability space 
$(\Oa, \CF, (\CF_t)_{t \ge 0}, \MP)$ with the filtration satisfying the usual conditions, 
and fix independent standard Brownian motions $ W_1, W_2, \ldots $ 
with respect to the filtration $ (\CF_t)_{t \ge 0} $.

\begin{defn}
\label{defn:infinite-CBP}
Assume $X = (X(t), t \ge 0)$ is an $\BR^{\infty}$-valued adapted process such that  
$X(t) = (X_i(t))_{i \ge 1}$ is rankable for every $t \ge 0$, 
each coordinate $X_i = (X_i(t), t \ge 0)$ is a.s.\ continuous, and
\begin{equation}
	\label{eq:infinite-CBP}
	\md X_i(t) 
	= 
	\left[\SL_{k=1}^{\infty}1\left(\mP_{X(t)}(k) = i\right)g_k\right]\md t + \md W_i(t),\ \ i = 1, 2, \ldots
\end{equation}
Then $ X $ is called an \emph{infinite system of competing Brownian particles} 
with \emph{drift coefficients} $ g_1, g_2, \ldots$
We adopt the notation $ Y_k(t) := X_{\mP_{X(t)}(k)}(t) $ for the \emph{$k$th ranked particle}, and $ Z_k(t) := Y_{k+1}(t) - Y_{k}(t) $ for the \emph{$k$th gap}. The $\BR^{\infty}_+$-valued process $Z = (Z(t), t \ge 0)$, $Z(t) = (Z_k(t))_{k \ge 1}$, is called the \emph{gap process}.  
Each $X_i = (X_i(t), t \ge 0)$ is called the \emph{$i$th named particle}. 
Throughout this paper we consider rankable initial conditions,
and assume without lost of generality that the initial condition $ X(0) $ is \emph{standardized}.
That is, 
\begin{align*}
0 = X_1(0) \leq X_2(0) \leq X_3(0) \leq \ldots
\end{align*}
\end{defn}

A sufficient condition for the existence and uniqueness of \eqref{eq:infinite-CBP} is given by \cite{MyOwn6}.
To state this result, we define the configuration space of named particles:
\begin{align}
	\label{eq:Uspc}
	\CU 
	= 
	\Big\{ 
		x = (x_i)_{i \ge 1} \in \BR^\infty \, \Big| \,
		\lim\limits_{i \to \infty}x_i = \infty, 
		\text{ and } \SL_{i=1}^{\infty}e^{-\al x_i^2} < \infty,
		\text{ for all } \al > 0
	\Big\},
\end{align}
as well as the corresponding space of \emph{gaps}:
\begin{align}
	\label{eq:Vspc}
	\mathcal{V} := \{ (z_k)_{k=1}^\infty \in \BR^\infty_+ \, | \, (0, z_1,z_1+z_2,z_1+z_2+z_3,\ldots) \in \CU \}.
\end{align}
\begin{prop}[{\cite[Theorem~3.2]{MyOwn6}}] 
Assume that $ x \in \CU $ and the drift coefficients $(g_n)_{n \ge 1}$ satisfy 
\begin{equation}
	\label{eq:drifts}
		\SL_{k=1}^{\infty}g_k^2 < \infty.
\end{equation}
Then there exists in the weak sense a unique in law version of the infinite system~\eqref{eq:infinite-CBP} with $X(0) = x$. 
In this case, $ X(t) \in \CU $ for every $t \ge 0$ a.s.
\label{prop:existence-inf}
\end{prop}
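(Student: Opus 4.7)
The plan is to construct the infinite system as the limit of finite-particle approximations, and then to use the admissibility conditions $x \in \CU$ and $\SL_{k} g_k^2 < \infty$ to ensure that the limit remains in $\CU$ and genuinely solves~\eqref{eq:infinite-CBP}. First I would fix, for each $N \ge 1$, the finite competing Brownian particle system $X^{(N)} = (X^{(N)}_1, \ldots, X^{(N)}_N)$ with drift coefficients $g_1, \ldots, g_N$, driving Brownian motions $W_1, \ldots, W_N$, and initial data $X^{(N)}_i(0) = x_i$; well-posedness of finite CBPs is classical and will be recalled in Section~\ref{sect:finite} of the present paper.

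Next I would compare $X^{(N)}$ and $X^{(N+1)}$ under the synchronous coupling induced by the common driving Brownians. Because the only difference between the two dynamics lies in the possible collisions involving the added top particle, the $k$th ranked particle $Y^{(N)}_k(t)$ is monotone in $N$ on any finite time interval; this produces a pointwise candidate limit $Y_k(t) := \lim_{N \to \infty} Y^{(N)}_k(t)$ and, by inverting the ranking permutation, a candidate named process $X(t)$.

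The technical heart of the argument is to establish uniform Gaussian tail bounds of the form
\begin{equation*}
\MP\Bigl( \inf_{t \le T} X^{(N)}_i(t) \le R \Bigr) \;\le\; C(T, R, \al) \, e^{-\al x_i^2}
\end{equation*}
for every $\al > 0$, every large $i$, and uniformly in $N \ge i$. Such bounds can be produced by comparing each named particle with an independent Brownian motion carrying a drift bounded in absolute value by $\sup_{k}|g_k| < \infty$, where the finiteness follows from $\SL g_k^2 < \infty$. Combined with the hypothesis $\SL_{i} e^{-\al x_i^2} < \infty$, they imply that almost surely only finitely many named particles ever enter any compact set on $[0,T]$, so that the ranking stabilizes for $N$ large, the limit $X(t)$ lies in $\CU$ for every $t \ge 0$, and the infinite sum in~\eqref{eq:infinite-CBP} is well-defined and coincides with the limiting finite-rank dynamics.

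For uniqueness in law, I would couple any other weak solution $\widetilde X$ to the constructed process $X$ through the same driving Brownians and sandwich the first $N$ ranked particles of $\widetilde X$ between two finite CBP truncations of $X$; the tail bound above forces the sandwich to collapse as $N \to \infty$. The main obstacle I anticipate is the uniform Gaussian tail estimate itself: because the ranking permutation is a non-smooth, nonlinear function of the coordinates, one cannot compare $X^{(N)}_i(t)$ to an independent Brownian motion directly, and one must instead reroute through an auxiliary system of freely moving Brownians with bounded drifts while keeping every constant uniform in $N$. Without that uniformity, neither the stabilization of the ranking nor the propagation in time of the $\CU$ admissibility can be concluded.
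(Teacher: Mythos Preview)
The paper does not prove this proposition at all: it is merely quoted from \cite[Theorem~3.2]{MyOwn6} and used as a black box, so there is no proof in the present paper to compare your attempt against. Your outline is broadly the right strategy and is in the spirit of the argument in \cite{MyOwn6}: approximate by finite systems, use comparison/monotonicity in $N$ for the ranked particles, and exploit a uniform-in-$N$ tail bound to show that only finitely many named particles visit any compact set on $[0,T]$, which both keeps the limit in $\CU$ and makes the infinite SDE well-posed.

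One correction to your self-assessment: the step you flag as the ``main obstacle'' is in fact immediate. For a named particle $X^{(N)}_i$, the drift term in \eqref{eq:finite-CBP} equals $\sum_k 1(\mP_{X^{(N)}(t)}(k)=i)\,g_k$, and at each time exactly one indicator is nonzero; hence the instantaneous drift is one of $g_1,\ldots,g_N$ and is bounded in absolute value by $g_* := \sup_k|g_k| < \infty$. Consequently
\[
\bigl|X^{(N)}_i(t) - x_i - W_i(t)\bigr| \le g_*\,t \quad \text{for all } t \ge 0,
\]
so $\inf_{t\le T} X^{(N)}_i(t) \ge x_i - g_*T + \inf_{t\le T} W_i(t)$, and the desired Gaussian tail bound, uniform in $N$, follows directly from the reflection principle. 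No auxiliary system or rerouting is needed; the nonlinearity of the ranking permutation is irrelevant here because the bound only uses that the drift is pointwise bounded, not which rank the particle occupies. The genuinely delicate parts of the argument lie elsewhere: making precise the monotone coupling of ranked particles as $N$ grows, passing to the limit in the SDE (handling the local times and the discontinuity of the rank indicators, as the paper itself does in Step~5 of the proof of Theorem~\ref{thm:main} via a zero-Lebesgue-measure-of-ties lemma), and the uniqueness-in-law sandwich, which requires the comparison results of \cite{MyOwn2} rather than a bare synchronous coupling.
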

\begin{rmk} If, instead of \eqref{eq:drifts}, 
we impose a stronger condition on the drift coefficients: 
the sequence of drifts eventually vanishes, that is,
\begin{equation}
\label{eq:drifts-stable}
\mbox{for some}\ \ n_0,\ \ g_{n_0} = g_{n_0+1} = \ldots = 0,
\end{equation}
then the system~\eqref{eq:infinite-CBP} exists in the strong sense and is pathwise unique, see \cite{IKS2013}. 
\end{rmk}

\begin{rmk} 
The gap process $ Z=Z(t) $ is invariant under adding a drift $ g_\infty dt $
to each named particle. 
Therefore, the conditions~\eqref{eq:drifts} is readily generalized to 
\begin{align*}
	\lim_{k \to \infty}g_k = g_{\infty},
	\quad \text{and }
	\sum_{k=1}^{\infty} (g_k - g_{\infty})^2 < \infty.
\end{align*}
Similarly, the condition~\eqref{eq:drifts-stable} is generalized to the condition
$
	g_{n_0} = g_{n_0+1} = \ldots = g_\infty.
$
\end{rmk}

\subsection{Main result}
\label{sect:main}
The question of current interest is to find stationary distributions for the gap process $ Z(t) $. Let us first rigorously define this concept. Take an infinite system $X$ of competing Brownian particles with drift coefficients $g_1, g_2, \ldots$; let $Z$ be its gap process.
\begin{defn} A probability measure $\pi$ on $\BR^{\infty}_+$ is called a \emph{stationary gap distribution} 
or a \emph{quasi-stationary distribution} for the system $ X $ 
if there exists in the weak sense a unique in law version of \eqref{eq:infinite-CBP}  with $Z(0) \sim \pi$, 
and for this version we have: $Z(t) \sim \pi$ for every $t \geq 0$. 
\end{defn}

Let $\Exp(\la)$ denote the exponential distribution with mean $\la^{-1}$,
i.e.\ having density $\la e^{-\la x}\md x, x > 0$.
The following stationary distribution of the gap process of the Atlas model \eqref{eq:infinite-Atlas}
was derived by Pal and Pitman~\cite{PP2008}:
\begin{equation}
	\label{eq:pi-infty}
	\ol{\pi} := \bigotimes\limits_{k=1}^{\infty}\Exp(2).
\end{equation}
Samples from this distribution 
are configurations of particles on $ \BR_+ $ of roughly uniform density $ 2 $,
where the value $ 2 $ arises from 
the balancing between the unit drift $ g_1=1 $ and the push-back from the crowd of particles,
as heuristically explained in \cite{Aldous}.
It was further shown in \cite{DemboTsai} that,
under \eqref{eq:pi-infty}, each ranked particle
$ Y_k(t) $ typically deviates $ O(t^{1/4}) $ from its starting location $ Y_k(0) $ for large $ t $.

Here, we provide a one-parameter family of stationary gap distributions $ \pi_a $,
with drastically distinct behaviors:
the density grows exponentially as $ x\to\infty $
and each rank particle $ Y_k $ travels linearly in time (in expectation).
Denote the average of the first $ n $ drift coefficients by $ \ol{g}_n $:
\begin{align}
	\label{eq:olg}
	\ol{g}_n := \tfrac1n \left(g_1 + \ldots + g_n \right).
\end{align}
The following is the main result of this paper.
\begin{thm}
\label{thm:main}
Consider an infinite system of competing Brownian particles from~\eqref{eq:infinite-CBP} with drift coefficients satisfying~\eqref{eq:drifts}. Take any real number $a$ such that 
\begin{align}
	\label{eq:a:cnd}
	a > -2\inf\limits_{n \ge 1}\ol{g}_n.
\end{align}
(a) The following measure $ \pi_a $ is supported on $ \mathcal{V} $,
and is a stationary distribution for the gap process:
\begin{align}
	\pi_a := \bigotimes\limits_{n=1}^{\infty}\Exp\left(2(g_1 + \ldots + g_n) + na\right).
	\label{eq:pi-a-general}
\end{align}
(b) If $Z(0) \sim \pi_a$: the system is in this stationary distribution, then 
\begin{align*}
	\ME\left(Y_k(t) - Y_k(0)\right) = -\tfrac a2t,\ \ t \ge 0,\ \ k = 1, 2, \ldots
\end{align*}
\end{thm}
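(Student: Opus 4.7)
My plan is to approximate the infinite system by a sequence of finite $N$-particle systems, transfer the classical product-form stationary distribution from each finite approximation to the projective limit, and derive part~(b) by a center-of-mass identity that also passes through the approximation.

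Concretely, for each $N \ge 2$ I would consider a finite CBP system $X^{(N)}$ with drift coefficients
\[
g_k^{(N)} := g_k \quad (k=1,\ldots,N-1),
\qquad
g_N^{(N)} := -\tfrac{Na}{2} - (g_1+\ldots+g_{N-1}),
\]
chosen so that the average drift satisfies $\ol{g}_N^{(N)} = -a/2$. The classical product-form stationary rates for a finite CBP system are $\lambda_k^{(N)} = 2\bigl((g_1+\ldots+g_k) - k\,\ol{g}_N^{(N)}\bigr)$, and with the above choice these reduce to $\lambda_k^{(N)} = 2(g_1+\ldots+g_k) + ka = \lambda_k$ for every $k<N$, \emph{independent of} $N$. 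Thus the finite stationary gap distributions $\pi_a^{(N)} = \bigotimes_{k=1}^{N-1}\Exp(\lambda_k)$ form a consistent family of marginals whose projective limit on $\BR_+^\infty$ is exactly $\pi_a$. Condition~\eqref{eq:a:cnd} simultaneously ensures $\lambda_k>0$ and the stability condition $\ol{g}_k^{(N)} > \ol{g}_N^{(N)}$ for every $k<N$.

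The proof then splits into four steps. First, for each finite $N$ I invoke the classical stationary-distribution result for finite CBP (following \cite{PP2008}, or via skew-symmetric SRBM theory in the orthant) to conclude $Z^{(N)}$ is stationary under $\pi_a^{(N)}$. Second, I verify $\pi_a(\mathcal V)=1$: the inequality $\lambda_j \ge jc$ with $c := a + 2\inf_n\ol{g}_n > 0$, combined with standard concentration for sums of independent exponentials, forces the partial sums $Y_k(0) = Z_1+\ldots+Z_{k-1}$ to grow at least like $\log k$ a.s., which is far more than enough for $\SL_i e^{-\al Y_i^2}<\infty$. Third and most delicately, I would show that on any fixed interval $[0,T]$ and for every fixed $K$, the bottom ranks $(Y_1^{(N)},\ldots,Y_K^{(N)})$ converge in distribution to $(Y_1,\ldots,Y_K)$ as $N\to\infty$; combined with Step~1 and the marginal consistency of the $\pi_a^{(N)}$, this yields $Z(t)\sim\pi_a$ whenever $Z(0)\sim\pi_a$, proving part~(a).

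For part~(b), in each finite system the collision local times telescope inside $\sum_{k=1}^N Y_k^{(N)}$, so taking expectations under the stationarity of $Z^{(N)}$ --- which forces $\ME[Y_k^{(N)}(t)-Y_k^{(N)}(0)]$ to be independent of $k$ --- identifies the common value as $\ol{g}_N^{(N)} t = -at/2$. Passing to the limit via Step~3 delivers the stated identity for the infinite system. The main obstacle I anticipate is precisely Step~3: since the corrective top drift $g_N^{(N)}$ is unbounded in $N$, standard weak-convergence arguments must be combined with synchronous coupling and the rankwise comparison principle for competing particle systems, exploiting that the anomalous $N$th particle can only influence rank $K$ through a cascade of collisions whose propagation time grows with $N$.
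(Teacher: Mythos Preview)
Your overall strategy---approximate by finite systems with a built-in average drift of $-a/2$, use the known product-form stationary law for each finite system, and pass to the limit---is exactly the paper's strategy. The difference lies in \emph{how} the corrective drift is distributed, and this difference is not merely cosmetic.

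You place the entire correction on a single top particle, so that $g_N^{(N)}=-\tfrac{Na}{2}-(g_1+\ldots+g_{N-1})\sim -\tfrac{Na}{2}$ is \emph{unbounded} in $N$. The paper instead takes $m^2$ particles, keeps the first $m$ drifts equal to $g_1,\ldots,g_m$, and assigns a common drift $b_m\to -a/2$ to the remaining $m^2-m$ particles. The point of this construction is precisely that all drifts $g_k^{(m)}$ stay uniformly bounded in $m$; see~\eqref{eq:boundedness}. This boundedness is what drives the key tail estimate (Lemma~\ref{lem:Xmtail}): each named particle is compared to a Brownian motion with bounded drift, yielding the bounds that establish tightness and, ultimately, convergence of both named and ranked particles. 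With your $g_N^{(N)}$ unbounded, the comparison to $X_k(0)+W(t)-g_{**}t$ behind~\eqref{eq:Xlwtail} breaks down, and so does the modulus-of-continuity tightness argument~\eqref{eq:osc} for any named particle that visits rank~$N$.

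Your proposed workaround---that the anomalous top particle influences rank~$K$ only through a collision cascade whose propagation time grows with $N$---is the step I do not see how to justify, and the heuristic actually points the wrong way. Under $\pi_a^{(N)}$ the top particle sits at distance $\sum_{k=K}^{N-1}\lambda_k^{-1}\asymp a^{-1}\log(N/K)$ from $Y_K$, while its drift has magnitude of order $N$; a naive speed estimate gives an influence time of order $(\log N)/N\to 0$, not $\to\infty$. Of course reflection complicates this, but the burden is on you to produce a quantitative barrier estimate under an $O(N)$ drift, and the standard comparison/coupling results for competing Brownian particles (e.g.\ those in \cite{MyOwn2,MyOwn6}) are stated for bounded drifts. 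The paper's choice of spreading the correction over $m^2-m$ particles is precisely engineered to sidestep this difficulty; your construction trades the pleasant marginal consistency $\lambda_k^{(N)}=\lambda_k$ for a convergence problem that appears substantially harder.
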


We now provide some important special cases of the general systems considered in Theorem~\ref{thm:main}.
\begin{exm} 
Infinite Atlas model: $g_1 = 1$, and $g_k = 0$ for $k \ge 2$. 
Then $\inf_{n \ge 1}\ol{g}_n = 0$, so for $a > 0$,
we have the following family of stationary distributions:
\begin{align}
	\label{eq:pa:Atlas}
	\pi_a := \bigotimes\limits_{n=1}^{\infty}\Exp(2 + na).
\end{align}
\end{exm}

\begin{exm} 
\label{exm:indep}
Independent Brownian motions: $g_1 = g_2 = \ldots = 0$, so $\inf_{n \ge 1}\ol{g}_n = 0$, and for $a > 0$ we have the following family of stationary distributions:
$$
\pi_a := \bigotimes\limits_{n=1}^{\infty}\Exp(na).
$$
\end{exm}

\begin{exm}
\label{exm:iAtlas}
The ``inverted Atlas'' model, where the bottom particle has negative drift: $g_1 = -1$, $g_2 = g_3 = \ldots = 0$. Then $\inf_{n \ge 1}\ol{g}_n = -1$, and for $a > 2$ we get:
$$
\pi_a := \bigotimes\limits_{n=1}^{\infty}\Exp(-2 + na).
$$
\end{exm}

\begin{rmk}\label{rmk:drfts}
Actually, the condition~\eqref{eq:drifts} does not play a crucial role in the proof of Theorem~\ref{thm:main}.
More precisely,
under the weaker condition $ \sup|g_n| < \infty $, 
our proof of Theorem~\ref{thm:main} still
applies for constructing a copy of the infinite system with $ Z(t) \sim \pi_a $ for all $t \ge 0$.
The stronger condition~\eqref{eq:drifts} is assumed merely 
to ensure that the solution to \eqref{eq:infinite-CBP} is unique in law,
so that the notion of stationary gap distribution is well-defined.
\end{rmk}

Theorem~\ref{thm:main} shows that
the stationary gap distributions for systems of competing Brownian particles
(and in particular for the infinite Atlas model) are \emph{not} unique.
In fact, as we further show in Appendix~\ref{sect:appd},
the distributions $ \pi_a $ are mutually \emph{singular} for different values of $a$. 
This result in particular resolves the conjecture \cite[Conjecture~2]{PP2008}
of Pal and Pitman in the negative.
As mentioned previously,
for any $ a $ satisfying \eqref{eq:a:cnd},
the distribution $ \pi_a $ exhibits exponentially growing density as $ x\to\infty $.
To see why this is true,
assuming the condition \eqref{eq:drifts-stable} for simplicity,
for $ (\zeta_k)_{k=1}^\infty \sim \pi_a $,
we note that 
\begin{align*}
	L_n := \sum_{k=1}^n \ME(\zeta_k) = \sum_{k=1}^n \frac{1}{g_1+\ldots+g_{k}+ka} = a^{-1} \log n +c_n,	
\end{align*}
where $ \{c_n\} $ is a bounded sequence.
Inverting this relation yields $ n = c'_n e^{aL_n} $, where $ c'_n := e^{-ac_n} $.
This suggests that there are typically (up to a proportion) $ e^{aL} $ particles within an interval $ [0,L] $.
A precise statement of this is given and proven in Appendix~\ref{sect:appd}.

For the discrete-time analogue of independent Brownian particles from Example~\ref{exm:indep}, quasi-stationary distributions of the type $ \pi_a $
already appeared in the study of the Sherrington--Kirkpatrick model of spin glasses \cite{RA}. Such a distribution arises naturally for independent Brownian particles.
However, it is far from obvious that similar quasi-stationary distributions should appear 
in the context of competing Brownian particles,
since rank-based drifts introduce complicated dependence among particles.

Rather, the product-of-exponential distribution $ \pi_a $ arises from the study of 
Reflected Brownian Motion (RBM).
We give a heuristic derivation of the distribution $ \pi_a $
using RBM in the infinite-dimensional positive orthant $\BR^{\infty}_+$ in Section~\ref{sect:RBM}.
To justify this heuristic derivation (i.e.\ to prove Theorem~\ref{thm:main})
requires taking a sequence of finite systems of competing Brownian particles 
with suitable drift coefficients $ (g_{k,N})_{k=1}^N $
and showing that the sequence converges to the infinite system.
Even for the Atlas model,
where $ g_1 =1 $ and $ g_2=g_3=\ldots=0 $,
we need to construct $ g_{k,N} $ that \emph{varies} in a suitable way over $ k=2,\ldots,N $,
in order to simulate the pressure caused by the exponentially dense particles at $ x \to \infty $;
see \eqref{eq:drift-approx}.
This is in sharp contrast 
with the derivation of the measure $ \ol{\pi} $ \eqref{eq:pi-infty},
where $ (g_{k,N})_{k=1}^N $ can be taken to be $ (1,0,\ldots,0) $.

Theorem~\ref{thm:main} further demonstrates
a sharp contrast between finite and infinite systems of competing Brownian particles,
regarding the criteria for having stationary gap distributions.
For a finite system to have a stationary gap distribution,
the \emph{stability condition}
\begin{equation}
	\label{eq:stability}
	\ol{g}_k > \ol{g}_N,\ \ k = 1, \ldots, N - 1
\end{equation}
 must hold (see Proposition~\ref{prop:stability}),
as \eqref{eq:stability} imposes a ``crowding'' mechanism on the rank particles.
On the other hand, for an infinite systems,
the stationary gap distribution $ \pi_a $ may exist even without
any form of crowding mechanisms from the drifts.
As we see in Example~\ref{exm:indep}, the drifts are not in effect.
In Example~\ref{exm:iAtlas}, the drifts introduce a ``repelling'' mechanism---the \emph{opposite} of a crowding mechanism.
The sharp contrast between finite and infinite systems
is due to the additional crowding effect, in infinite systems,
caused by pressure from exponentially growing density under $ \pi_a $.

\subsection{Conjectures} 
Here we state some conjectures related to Theorem~\ref{thm:main}.
First we recall that, 
for more general systems of competing Brownian particles than the Atlas model,
\cite{MyOwn6} derived the following stationary gap distribution
\begin{align}
	\label{eq:pi-infty:}
	\pi_0 := \bigotimes\limits_{k=1}^{\infty}\Exp\left(2(g_1 + \ldots + g_k )\right).
\end{align}
This is done in \cite[Section~4.2]{MyOwn6} under the condition~\eqref{eq:drifts} and an additional condition
that there exists $ N_1<N_2<\ldots\to\infty $ such that
\begin{align}
	\label{eq:drifts:Nj}
	\ol{g}_{k} > \ol{g}_{N_j},
	\text{ for }
	k=1,\ldots, N_{j}-1,\ \ j \ge 1.
\end{align}
\begin{rmk}
It follows from Theorem~\ref{thm:main} that $ \pi_0 $ is supported on $ \mathcal{V} $.
To see this, fix a positive $ a>0 $ satisfying the condition~\eqref{eq:a:cnd}.
Let $ \zeta = (\zeta_n)_{n=1}^\infty \sim \pi_0 $ and $ \zeta'=(\zeta'_n)_{n=1}^\infty\sim\pi_a $
be gap processes sampled from the designated distributions.
With $ a>0 $, 
comparing \eqref{eq:pi-infty:} and \eqref{eq:pi-a-general},
we find that $ \zeta $ stochastically dominates $ \zeta' $.
That is, there exists a coupling of $ \zeta,\zeta' $ under which
\begin{align}
	\label{eq:zeta:cmp}
	\zeta_n \geq \zeta'_n,	\ \text{for all } n =1,2,\ldots,	\quad \text{a.s.} 
\end{align}
By Theorem~\ref{thm:main}(a),
we have $ \zeta' \in \mathcal{V} $ a.s.
Combining this with \eqref{eq:zeta:cmp} yields $ \zeta\in\mathcal{V} $ a.s.
\end{rmk}

This stationary gap distribution~\eqref{eq:pi-infty:} 
generalizes the distribution~\eqref{eq:pi-infty} for the Altas model.
Here we use the notation $ \pi_0 $ to unify notation with \eqref{eq:pi-a-general}.
Note that under the conditions~\eqref{eq:drifts} and \eqref{eq:drifts:Nj},
we necessarily have $ \inf_n \ol{g}_n =0 $.
With this, under the preceding notations, $ \pi_a $ 
is a stationary gap distribution for all $ a\in[0,\infty)=\BR_+ $,
\emph{including} $ a=0 $.
We now conjecture that, the mixtures of these measures, over different values of $ a\in\BR_+ $,
exhaust all stationary gap distributions:
\begin{conj}
\label{conj:invt}
Under the conditions~\eqref{eq:drifts} and \eqref{eq:drifts:Nj},
any stationary gap distribution of an infinite system of competing Brownian particles 
is of the following form, for some probability measure $\rho$ on $\BR_+$:
\begin{align*}
	\pi_{\rho}(\Cdot) := \int_{\BR_+}\pi_a(\Cdot) \rho(\md a).
\end{align*} 
\end{conj}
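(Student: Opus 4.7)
The plan is to establish the conjecture by an ergodic decomposition indexed by a single scalar ``drift parameter'' $a$. The set $\mathcal{I}$ of stationary gap distributions is convex, and by Choquet's theorem (applied to the Markov semigroup on $\mathcal{V}$) every element of $\mathcal{I}$ is a mixture of its extremal points. The goal therefore reduces to showing that the extremal set is exactly $\{\pi_a : a \in \BR_+\}$, which splits into (i) each $\pi_a$ is extremal, and (ii) there are no other extremal stationary distributions.

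For (i), I would use a synchronous coupling. Start two copies of the infinite system from independent samples of $\pi_a$ driven by the common Brownian motions $W_1,W_2,\ldots$, and show that the two ranked configurations asymptotically agree in total variation on each finite window of gaps. The product-of-exponentials structure of $\pi_a$, together with the fact that the $n$th exponential rate $2(g_1+\cdots+g_n)+na$ grows linearly in $n$, suggests that the high-rank gaps behave nearly independently and can be coupled first; merging should then propagate downward in rank through the rank-swap interaction, giving triviality of the tail $\sigma$-algebra and hence extremality.

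For (ii), the key input is Theorem~\ref{thm:main}(b): under $\pi_a$, every ranked particle drifts linearly with slope $-a/2$. For an arbitrary stationary $\pi$, I would show that
\begin{align*}
A := -2\lim_{t \to \infty}\tfrac{1}{t}\bigl(Y_1(t) - Y_1(0)\bigr)
\end{align*}
exists $\pi$-a.s.\ by a Birkhoff-type argument applied to the semimartingale decomposition of $Y_1$ (the drift contribution at the bottom rank plus Brownian increments plus a local-time term whose normalized magnitude vanishes), and that $A$ is invariant under the time shift, hence $\pi$-a.s.\ constant on each ergodic component. The conditional law of the gap process given $\{A=a\}$ is then an extremal stationary distribution with bottom-particle drift $-a/2$. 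A stochastic monotonicity argument in the spirit of \cite{MyOwn6}, using \eqref{eq:drifts:Nj} to compare with the finite truncations $(g_{k,N})_{k=1}^N$, should sandwich this conditional law between $\pi_{a-\eps}$ and $\pi_{a+\eps}$ for every $\eps>0$, forcing equality with $\pi_a$. Setting $\rho := \mathrm{Law}_{\pi}(A)$ yields the desired mixture representation.

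The main obstacle is this final ``drift determines distribution'' step, i.e.\ ruling out exotic extremal stationary distributions that realize the bottom-particle drift $-a/2$ yet differ from $\pi_a$ in their finer correlation structure. Choquet theory alone does not supply uniqueness within a fixed drift level, and the rank-based interactions couple all coordinates in a way that resists coordinate-wise moment matching. The most promising route is a monotone coupling of an arbitrary stationary $\pi$ with the one-parameter family $\{\pi_{a'}\}$, but this requires robust a priori tail estimates on $Z(0)$ under the unknown measure $\pi$---estimates precisely of the sort that condition \eqref{eq:drifts:Nj} is designed to produce via the finite-system comparisons developed in \cite{MyOwn6}. Making those comparisons work for \emph{all} stationary initial distributions, rather than only for product-of-exponentials initial data, is where I expect the real difficulty to lie.
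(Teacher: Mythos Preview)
This statement is \emph{Conjecture}~\ref{conj:invt} in the paper, not a theorem: the paper does not supply a proof, and explicitly lists it among open problems. There is therefore no ``paper's own proof'' to compare against. What you have written is not a proof but a program, and you yourself flag the decisive gap.

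Your outline is the natural one: Choquet decomposition into extremal stationary measures, extremality of each $\pi_a$ via a coupling argument, and then identification of an arbitrary extremal measure with some $\pi_a$ through the asymptotic drift $A$ of $Y_1$. The first two steps are plausible in spirit, though already nontrivial: applying Choquet in this infinite-dimensional, non-locally-compact setting requires care (one needs a suitable Polish topology on $\mathcal{V}$ and a Feller-type property of the gap semigroup), and the synchronous coupling you sketch for extremality of $\pi_a$ is by no means automatic, since rank swaps can separate as well as merge trajectories.

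The real obstruction is exactly the one you name at the end: showing that the drift level $A=a$ pins down the extremal measure uniquely as $\pi_a$. Your proposed sandwich between $\pi_{a-\eps}$ and $\pi_{a+\eps}$ via the monotonicity results of \cite{MyOwn6} presupposes a priori control on the tails of $Z(0)$ under an \emph{unknown} stationary $\pi$, and nothing in the hypotheses \eqref{eq:drifts}--\eqref{eq:drifts:Nj} furnishes that control. The finite-system comparisons in \cite{MyOwn6} are run forward from explicit product initial data; running them backward to constrain an arbitrary invariant law is a different problem. Until this step is made rigorous, the argument remains a heuristic, which is consistent with the paper leaving the statement as a conjecture.
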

\begin{rmk}
For the discrete time analog of the driftless system (i.e.\ $ g_1=g_2=\ldots =0$),
\cite{RA} has already proven the analogous statement as in Conjecture~\ref{conj:invt}.
Driftless systems differ from the systems considered in Conjecture~\ref{conj:invt}
in that the former does not satisfies the condition~\eqref{eq:drifts:Nj}.
Consequently, driftless systems lack stationary gap distribution of the type $ \pi_0 $,
and the statement in \cite{RA} involves only the parameter $ a>0 $.
%
\end{rmk}

A natural open problem following Theorem~\ref{thm:main}
is the large time behavior of each rank particle $ Y_k(t) $.
In view of Theorem~\ref{thm:main}(b), here we conjecture:
\begin{conj}
Fix $ (g_n)_{n \geq 1} $ and the parameter $ a $ as in Theorem~\ref{thm:main}.
Initiating the system of competing Brownian particles
at the configuration $ X_1(0)=0 $ and $ (Z_k(0))_{k=1}^\infty \sim \pi_a $,
we have that, for any fixed $ k \in \MZ_{>0} $,
\begin{align*}
	\frac{ Y_k(t) }t  \to -\frac{a}{2} \ \ \text{ as }  t \to \infty\, \text{ a.s.}
\end{align*}
\end{conj}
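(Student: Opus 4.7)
The plan is to reduce the almost-sure limit to an ergodic theorem for the collision local times of the ranked system. For each $k \ge 1$, the standard Skorokhod representation for competing Brownian particles reads
\begin{align*}
	Y_k(t) = Y_k(0) + g_k t + B_k(t) + \tfrac12 L_{k-1,k}(t) - \tfrac12 L_{k,k+1}(t),
\end{align*}
where $B_k$ is a standard Brownian motion, $L_{k,k+1}$ is the semimartingale local time at $0$ of the gap $Z_k = Y_{k+1}-Y_k$, and $L_{0,1}\equiv 0$. Since $B_k(t)/t \to 0$ a.s.\ and $Y_k(0)/t \to 0$, and since $\alpha_k - \alpha_{k-1} = 2g_k + a$ for $\alpha_k := 2(g_1+\ldots+g_k) + ka$, the conjecture reduces to the claim
\begin{align*}
	\frac{L_{k,k+1}(t)}{t} \;\longrightarrow\; \alpha_k \qquad \text{a.s.\ as } t\to\infty,\quad k\ge 0,
\end{align*}
after which the telescoping identity $g_k + \tfrac12(\alpha_{k-1}-\alpha_k) = -a/2$ yields the conclusion.

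That $\alpha_k$ is the right candidate limit follows from Theorem~\ref{thm:main}(b). Taking expectations in the decomposition under $Z(0)\sim\pi_a$ and using $\EE[Y_k(t)-Y_k(0)] = -at/2$ gives $\EE[L_{k,k+1}(t)]-\EE[L_{k-1,k}(t)] = (2g_k+a)\,t$, and induction in $k$ (with base case $L_{0,1}\equiv 0$) yields $\EE[L_{k,k+1}(t)] = \alpha_k t$. Reassuringly, $\alpha_k$ is exactly the rate parameter of the $k$th marginal $\Exp(2(g_1+\ldots+g_k)+ka)$ of $\pi_a$, as is heuristically predicted by a level-crossing argument for the reflecting component of $Z_k$ at zero. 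To upgrade this mean identity to an almost sure limit, the natural tool is the pointwise ergodic theorem applied to the continuous additive functional $L_{k,k+1}$ of $Z$: using the occupation-time approximation $L_{k,k+1}(t) = \lim_{\eps\downarrow 0}\tfrac{1}{2\eps}\int_0^t 1(Z_k(s) < \eps)\,\md s$, Birkhoff's theorem would then deliver $L_{k,k+1}(t)/t \to \alpha_k$ a.s., provided the time-shift on the path space of $Z$ started at $\pi_a$ is ergodic.

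The principal obstacle is therefore establishing ergodicity of the time-shift for $Z$ started at $\pi_a$. A first attempt exploits the fact that $\pi_a$, being a product measure, has trivial tail $\sigma$-algebra by Kolmogorov's zero-one law; one would then try to show, via a finite-range coupling of two copies of $Z$ differing only in finitely many initial gaps, that every shift-invariant event in path space is measurable with respect to the tail of $Z(0)$. An alternative route is to approximate the infinite system by the truncated finite systems used in the proof of Theorem~\ref{thm:main}---each being positive-recurrent with a unique, hence ergodic, stationary gap distribution---and to transfer the ergodic property in the limit via the uniqueness-in-law statement of Proposition~\ref{prop:existence-inf}. Either strategy must contend with the non-local-compactness of $\mathcal{V}$ and with the rich family of competing invariant measures suggested by Conjecture~\ref{conj:invt}: extremality of $\pi_a$ among these is exactly what ergodicity requires, so any successful argument must at some point distinguish $\pi_a$ from nontrivial mixtures $\pi_\rho$ with $\rho \neq \delta_a$.
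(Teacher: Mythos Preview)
The statement you are attempting to prove is labeled a \emph{Conjecture} in the paper; the authors give no proof, so there is nothing to compare your argument against.  Your task was therefore to supply a proof where none exists, and the proposal should be judged on whether it actually closes the problem.

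Your reduction is correct.  The Skorokhod decomposition for $Y_k$ is exactly~\eqref{eq:infinite-CBP:rk}, and with $\alpha_k=2(g_1+\ldots+g_k)+ka$ the telescoping identity $g_k+\tfrac12(\alpha_{k-1}-\alpha_k)=-a/2$ holds, so the conjecture is indeed equivalent to $L_{(k,k+1)}(t)/t\to\alpha_k$ a.s.  The computation $\ME L_{(k,k+1)}(t)=\alpha_k t$ from Theorem~\ref{thm:main}(b) is also right.

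The genuine gap is that you do not prove ergodicity, and neither of your two sketches comes close to doing so.  For the first route, tail triviality of the product measure $\pi_a$ on $\BR_+^\infty$ says nothing about the \emph{temporal} shift on path space; you would need a successful coupling showing that two copies of $Z$ with initial conditions differing in finitely many coordinates merge a.s., and no such coupling is available for this infinite, non-locally-compact system.  For the second route, ergodicity is not preserved under weak limits of processes, and the finite approximants $X^{(m)}$ used in Section~\ref{sect:finite} do not share the invariant measure $\pi_a$ anyway (their stationary gap laws $\pi^{(m)}_a$ agree with $\pi_a$ only on the first $m$ coordinates).  You correctly observe that ergodicity of $\pi_a$ is equivalent to its extremality among stationary gap distributions; but extremality of each $\pi_a$ is itself open and is essentially a substatement of Conjecture~\ref{conj:invt}.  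In short, your proposal reduces one open problem to another open problem of comparable depth, which is a reasonable heuristic outline but not a proof.

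A smaller technical point: even granting ergodicity, your appeal to Birkhoff via the occupation-time formula $L_{(k,k+1)}(t)=\lim_{\eps\downarrow 0}(2\eps)^{-1}\int_0^t 1(Z_k(s)<\eps)\,\md s$ requires justifying the interchange of $t\to\infty$ and $\eps\downarrow 0$.  The cleaner way is to invoke the ergodic theorem for continuous additive functionals of a stationary ergodic Markov process directly, but you should state this and verify that $Z$ is Markov in its own filtration.
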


\subsection{Motivation and literature review} 
The Atlas model and the more general systems of competing Brownian particles
are models of interest in mathematical finance.
In particular, finite systems of competing Brownian particles
(with rank-based drifts and rank-based diffusion coefficients) 
were introduced in \cite{BFK} for the purposes of stock market modeling.
Weak existence and uniqueness in law for these systems 
follows from the earlier work of \cite{BassPardoux}.
Specific applications to mathematical finance  
include the study of: stability of the capital distribution \cite{CP2010}, 
market models with splits and mergers \cite{MyOwn4},
and portfolio optimization in \cite{JR2013b}.
Furthermore, 
finite systems of competing Brownian are of interest in their 
due to their intruding mathematical features.
There has been extensive study on
various aspects of their properties, including:
deriving the unique stationary gap distribution \cite{PP2008, 5people};
weak convergence to this stationary distribution \cite{IPS2013, MyOwn10};
the stochastic monotonicity \cite{MyOwn2};
small noise limits \cite{JR2014};
propagation of chaos \cite{JM2008};
refined properties of two dimensional systems \cite{FIKP2013};
and the question of triple collision 
(when three or more particles occupy the same position at the same time)
\cite{IK2010, IKS2013, MyOwn5, MyOwn3}. 
The last question is important
because the strong solution of a finite system of competing Brownian particles 
is only proved to exist until the first triple collision, \cite{IKS2013}. 
 
In addition to their role in mathematical finance, 
systems of competing Brownian particles 
arise as the continuum limit of exclusion processes \cite{KPS2012},
and also serve as a discrete analogue 
of a nonlinear diffusion governed by a McKean-Vlasov stochastic differential equation. 
In fact, a nonlinear diffusion can be approximated by finite systems competing Brownian particles, see \cite{S2012, JR2013a, Reygner, 4people}. 

\emph{Infinite} systems arise as natural models of large systems.
Specifically,
infinite systems of competing Brownian particles 
were first introduced in \cite{PP2008} for a special case of the infinite Atlas model, 
and later in \cite{S2011, IKS2013} for the general case,
as well as in \cite{MyOwn11} for two-sided systems $ X=(X_n)_{n \in \MZ} $.
Existence and uniqueness were established in \cite{S2011, IKS2013, MyOwn6}. 
As mentioned previously,
these infinite models exhibit stationary gap distributions $\pi_0$ from~\eqref{eq:pi-infty:} (in particular, $ \ol{\pi} $ from~\eqref{eq:pi-infty} for the infinite Atlas model)
of the desired product-of-exponential form.
This is shown in \cite{PP2008} for the infinite Atlas model and in \cite{MyOwn6} for general systems. In the latter paper \cite{MyOwn6}, the question of weak convergence of $Z(t)$ as $ t \to \infty $ was also studied.
As models of large systems,
the infinite Atlas model is naturally related to 
a certain stochastic partial differential equation \cite{DemboTsai}. 
Furthermore, as mentioned previously,
the driftless system already appeared in the description of the infinite volume limit
of the Sherrington--Kirkpatrick model.
See \cite{RA, AA, S2011} and the references therein.

There are several generalizations of these models: systems of competing L\`{e}vy particles, \cite{S2011, MyOwn12}; 
competing Brownian particles on the positive half-line, \cite{S2011, IKP2013} (in the former paper, these are called \emph{regulated systems}); 
competing Brownian particles with elastic collisions, \cite{Elastic, FIKP2013}; 
the case of asymmetric collisions, when particles behave after collision as if they had different mass, \cite{KPS2012}; 
second-order models, where drift and diffusion coefficients depend on both name and rank of the particle, \cite{5people, 2order}. 

\subsection{A heuristic derivation of $ \pi_a $}
\label{sect:RBM}
Here we give a heuristic derivation of the measure $ \pi_a $,
explaining how it arises from the theory of Reflected Brownian Motion (RBM). 
We shall not give detailed definition of an RBM here, 
and instead refer the readers to the classical survey \cite{Williams}.
Recall from \cite{MyOwn6} that,
under conditions of Proposition~\ref{prop:existence-inf}, the system $ Y = (Y_k)_{k \ge 1}$ of ranked particles solves 
the following infinite system of SDEs:
\begin{align}
	\label{eq:infinite-CBP:rk}
	\md Y_k(t) 
	= 
	g_k\md t + \md B_k(t) + \tfrac12\md L_{(k-1, k)}(t) - \tfrac12\md L_{(k, k+1)}(t),
	\ \ k = 1, 2, \ldots
\end{align}
Here, $ L_{(k,k+1)} $ denotes the local time at zero of $ Z_k = Y_{k+1}-Y_k $,
we let $ L_{(0, 1)}:=0 $ for consistency of notations, and 
\begin{align*}
	B_k(t) := \SL_{i=1}^{\infty}\int_0^t1\left(\mP_{X(t)}(i) = k\right)\md W_i(t),\ \ k = 1, 2, \ldots
\end{align*}
are independent standard Brownian motions.
With \eqref{eq:infinite-CBP:rk}, 
the process $ Z $ evolves as an RBM in the infinite-dimensional positive orthant $ \BR^\infty_+ $:
\begin{align}
	\label{eq:Z:RBM}
	\md Z(t) 
	= 
	g \md t + \md \tilde{B}(t) + R \md L(t),
\end{align}
where $ g := (g_k)_{k=1}^\infty $, $ \tilde{B}(t) := (B_{k+1}(t)-B_{k}(t))_{k=1}^\infty $,
$ L(t) := (L_{(k,k+1)}(t))_{k=1}^\infty $,
and $ R $ is the reflection matrix a tridiagonal matrix given by
\begin{align*}
	R = \begin{pmatrix}
		1	&-\tfrac12	&	0	&		0	&\ldots	\\
		-\tfrac12 &1	&-\tfrac12	&		0	&\ldots	\\
		0&	-\tfrac12 &1	&-\tfrac12			&\ldots	\\
		0&0&-\tfrac12 &1		&\ldots	\\
		\vdots&\vdots&\vdots&\vdots	& \ddots
	\end{pmatrix}.
\end{align*}
For \emph{finite}-dimensional RBM in the orthant, a sufficient condition for 
having product-of-exponential stationary distributions is
the skew-symmetry condition (see, e.g.\ \cite[Proposition~2.1]{MyOwn6} or \cite{Williams}).
It is straightforward to verify that
finite dimensional truncations of \eqref{eq:Z:RBM}
(i.e.\ \eqref{eq:ranked-SDE} in the following)
satisfy the skew-symmetry condition, and have the stationary distribution given by
\begin{align}
	\label{eq:Rmu}
	\bigotimes_{k=1}^{N-1} \Exp(\lambda_k),
	\quad
	\lambda  := R^{-1} \mu,
\end{align}
where $ \lambda := (\lambda_{k})_{k=1}^{N-1} $ and $ \mu := (g_1-g_2,\ldots, g_{N-1}-g_N) $.

Now, even though \eqref{eq:Rmu} holds only in the finite-dimensional setting,
let us \emph{informally} adopt it for deriving stationary distributions in the infinite-dimensional setting. 
Rewrite \eqref{eq:Rmu} as $ R\lambda = \mu $
(as it is not clear that $ R^{-1} $ is well-defined in infinite dimensions).
A solution of this equation is
\begin{align}
	\label{eq:lambda*}
	\lambda^* = (\lambda^*_k)_{k=1}^\infty,
	\quad
	\lambda^*_k := 2(g_1+g_2+\ldots+g_k ),
\end{align}
which gives rise to the measure $ \pi_0 $ in \eqref{eq:pi-infty:}.
This solution, however, is not unique: solving for the null vector $ R\eta = 0 $, we have
\begin{align*}
	&\eta_1 - \tfrac12 \eta_2 =0,
\\ 
	& \tfrac12 \eta_{k-1} - \eta_{k} + \tfrac12 \eta_{k+1} =0,
	\quad
	k=2,3,\ldots,
\end{align*}
which yields $ \eta = (1,2,3,\ldots) $.
With this, we have the following general solution to~\eqref{eq:Rmu}:
\begin{align}
	\label{eq:genSln}
	\lambda := \lambda^* + a \eta,
	\quad
	\text{i.e. }
	\lambda_k := 2(g_1+g_2+\ldots+g_k ) + ka,
\end{align}
with the extra condition~\eqref{eq:a:cnd} on $ a $ to ensure that each component of 
$ \lambda $ is positive.
The solution~\eqref{eq:genSln} then suggests that 
$ \pi_a $ should be also be a stationary distribution of $ Z $.

\subsection{Organization}
In Section~\ref{sect:finite}, we introduce 
finite systems of competing Brownian particles
together with the necessary tools,
and define the finite systems that will be used to prove Theorem~\ref{thm:main}.
In Section~\ref{sect:pfmain}, we prove Theorem~\ref{thm:main} by establishing 
the convergence of the finite systems to the corresponding infinite system.
Appendix~\ref{sect:appd} is devoted to
establishing properties of the measure $ \pi_a $ 
mentioned in Section~\ref{sect:main}.
\subsection{Acknowledgements}
We would like to thank \textsc{Michael Aizenman}, \textsc{Ramon van Handel},
\textsc{Tomoyuki Ichiba}, and \textsc{Mykhaylo Shkolnikov} for help and useful discussion. 

Andrey Sarantsev was partially supported by NSF through grants
DMS~1007563, DMS~1308340, DMS~1409434, and DMS~1405210.
Li-Cheng Tsai was partially supported by the NSF through DMS~1106627 
and the KITP graduate fellowship through NSF grant PHY11-25915.

\section{Finite Systems of Competing Brownian Particles}
\label{sect:finite}
To define a finite system of competing Brownian particles,
we fix $ N \geq 2 $ to be the number of particles,
and let $ g_1,\ldots, g_N $ denote the drift coefficients.
Here $ \mP_x(\Cdot) : \{1,\ldots,N\} \to \{1,\ldots, N\} $
denote the analogous ranking permutation for $ x\in\BR^N $,
which is unique by resolving ties in the lexicographic order.
Note that unlike in infinite dimensions, \emph{any} $ x\in\BR^N $ is rankable.

\begin{defn}
Take an $ \BR^N $-valued continuous adapted process 
$$
X = (X(t), t \ge 0),\ \ X(t) = (X_1(t), \ldots, X_N(t)),\ \ t \ge 0,
$$
which satisfies the following SDEs: for $i = 1, \ldots, N$, 
\begin{equation}
	\label{eq:finite-CBP}
	\md X_i(t) = \left[\SL_{k=1}^N1\left(\mP_{X(t)}(k) = i\right)g_k\right]\md t + \md W_i(t),
	\quad
	i=1,\ldots,N.
\end{equation}
Then $X$ is called a \emph{finite system of competing Brownian particles}. Each $X_i = (X_i(t), t \ge 0)$ is called the \emph{$i$th named particle}. As in Definition~\ref{defn:infinite-CBP},
we assume without loss of generality that the initial condition $ X(0) $ is \emph{standardized:}
$ 0=X_1(0) \leq X_2(0) \leq \ldots \leq X_N(0) $.
We similarly define \emph{ranked particles} $ Y_k = (Y_k(t), t \ge 0)$, and the \emph{gap process} $Z = (Z(t), t \ge 0)$, $ Z(t) = (Z_1(t), \ldots, Z_{N-1}(t)) \in\BR^{N-1}_+ $ as
\begin{align*}
	Y_k(t) &= X_{\mP_{X(t)}(k)}(t),
	\quad
	k=1,\ldots,N,
\\
\ \ Z_k(t) &= Y_{k+1}(t) - Y_k(t),
	\quad
	k=1,\ldots, N-1.
\end{align*}
\end{defn}
\noindent
Systems with rank-based diffusion coefficients may also be considered, 
but for our purposes it is sufficient to consider unit diffusion coefficients. 
It is known from \cite{IKS2013} that,
for any deterministic initial condition $ x\in\BR^N $,
the equation~\eqref{eq:finite-CBP} always has a strong solution, 
which is pathwise unique.

In the sequel we will also need to consider the dynamics for the ranked particles $ Y_k $.
To this end, we let $ L_{(k, k+1)} = (L_{(k, k+1)}(t), t \ge 0) $ be the local time process at zero of $Z_k$, for $k = 1, \ldots, N-1$, 
and call $L_{(k, k+1)}$ the 
\emph{local time of collision between the ranked particles $Y_k$ and $Y_{k+1}$}. For consistency of notation, we let $L_{(0, 1)}(t) \equiv 0$ and $L_{(N, N+1)}(t) \equiv 0$. It was shown in \cite{BG2008, 5people} that the dynamics of ranked particles is given by 
\begin{equation}
\label{eq:ranked-SDE}
\md Y_k(t) = g_k\md t + \md B_k(t) + \tfrac12\md L_{(k-1, k)}(t) - \tfrac12\md L_{(k, k+1)}(t),\ \ k = 1, \ldots, N,
\end{equation}
where the following processes are i.i.d. standard Brownian motions:
\begin{align}
	\label{eq:Bk}
	B_k(t) := 
	\sum_{i=1}^N \int_0^t1\left(\mP_{X(s)}(k) = i\right)\md W_i(s),\ \ k = 1, \ldots, N.
\end{align}

Our strategy of proving Theorem~\ref{thm:main} is to approximate 
the infinite system \eqref{eq:infinite-CBP} by certain finite systems.
To this end, let us recall the following result
(proved in \cite{PP2008, 5people, MyOwn6})
on the necessary and sufficient condition for the existence 
of stationary gap distributions for finite systems.

\begin{prop}
Recall the notation $ \ol{g}_k $ from \eqref{eq:olg}.
There exists a stationary distribution for the gap process if and only if
the stability condition~\eqref{eq:stability}.
In this case, this stationary distribution is unique and is given by
\begin{equation}
\label{eq:finite-product}
\pi = \bigotimes\limits_{k=1}^{N-1}\Exp\left(2(g_1 + \ldots + g_k - k\ol{g}_N)\right) = \bigotimes\limits_{k=1}^{N-1}\Exp\left(2k\left(\ol{g}_k - \ol{g}_N\right)\right).
\end{equation}
In addition, if the system is initiated from this stationary distribution, that is, $Z(0) \sim \pi$, then 
\begin{equation}
\label{eq:drift-together}
\ME\left(Y_k(t) - Y_k(0)\right) = \ol{g}_Nt,\ \ k = 1, \ldots, N,\ \ t \ge 0.
\end{equation}
\label{prop:stability}
\end{prop}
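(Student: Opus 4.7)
The plan is to identify the gap process $Z = (Z_k)_{k=1}^{N-1}$ as a semimartingale reflecting Brownian motion (SRBM) in the non-negative orthant $\BR^{N-1}_+$, and then to invoke the Harrison--Williams skew-symmetry criterion for product-form stationary laws. Differencing consecutive ranked particles in~\eqref{eq:ranked-SDE} yields
\begin{equation*}
\md Z_k = (g_{k+1} - g_k)\md t + \md\tilde{B}_k + \md L_{(k,k+1)} - \tfrac12 \md L_{(k-1,k)} - \tfrac12 \md L_{(k+1,k+2)},
\end{equation*}
where $\tilde{B}_k := B_{k+1} - B_k$. This realizes $Z$ as an SRBM with drift vector $\mu_k := g_{k+1} - g_k$, covariance matrix $A$ that is tridiagonal with $2$'s on the diagonal and $-1$'s immediately off, and reflection matrix $R$ that is tridiagonal with $1$'s on the diagonal and $-\tfrac12$'s immediately off. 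Both $A$ and $R$ are symmetric, and in fact $A = 2R$.

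With $D_A := \diag(A) = 2I$, the Harrison--Williams skew-symmetry condition $2A = RD_A + D_A R^T$ reduces to $4R = 4R$ and thus holds trivially. By the classical product-form theorem for SRBMs in the orthant (see, e.g., \cite{Williams}), whenever $Z$ is positive recurrent its unique stationary distribution is of product form $\bigotimes_{k=1}^{N-1} \Exp(\lambda_k)$, with the rate vector $\lambda$ determined by $R\lambda = -\mu$. A direct telescoping of this tridiagonal linear system (using the boundary convention $\lambda_0 = \lambda_N = 0$) yields
\begin{equation*}
\lambda_k = 2(g_1 + \cdots + g_k) - 2k\ol{g}_N = 2k(\ol{g}_k - \ol{g}_N),\quad k = 1,\ldots,N-1,
\end{equation*}
which is precisely~\eqref{eq:finite-product}. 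All $\lambda_k$ are simultaneously positive exactly when $\ol{g}_k > \ol{g}_N$ for $k < N$, giving sufficiency of~\eqref{eq:stability}.

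For the necessity half, I would appeal to standard SRBM recurrence theory: if $\ol{g}_{k_0} \le \ol{g}_N$ for some $k_0 < N$, then tracking the center of mass of the bottom $k_0$ ranked particles against the full configuration, after accounting for the telescoping of local times, shows the gaps cannot remain tight in $t$, precluding any stationary gap law. Part~(b) is then elementary: summing~\eqref{eq:ranked-SDE} over $k = 1,\ldots,N$ and using $L_{(0,1)} \equiv L_{(N,N+1)} \equiv 0$ causes the local-time contributions to telescope to zero, leaving $\sum_k (Y_k(t) - Y_k(0)) = N\ol{g}_N t + \sum_k B_k(t)$ and hence $\sum_k \ME[Y_k(t) - Y_k(0)] = N\ol{g}_N t$. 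Stationarity of $Z$ forces $\ME[Y_{k+1}(t) - Y_{k+1}(0)] = \ME[Y_k(t) - Y_k(0)]$ for every $k$, so each expectation equals $\ol{g}_N t$.

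The main obstacle is the necessity direction of the stability dichotomy together with the uniqueness assertion, which genuinely require SRBM recurrence/transience arguments rather than a purely algebraic check. The remaining ingredients---Harrison--Williams product-form identification, the telescoping that solves $R\lambda = -\mu$, and the expectation identity in part~(b)---are essentially computational and follow the template already established in \cite{PP2008, 5people, MyOwn6}, which may be cited directly.
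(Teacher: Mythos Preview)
The paper does not actually prove Proposition~\ref{prop:stability}; it is stated as a known result with the parenthetical ``(proved in \cite{PP2008, 5people, MyOwn6})'' and no further argument is given. Your proposal is consistent with the approach of those references and with the paper's own heuristic discussion in Section~\ref{sect:RBM}, where the gap process is identified as an SRBM satisfying the skew-symmetry condition and the rate vector is obtained from $R\lambda = \mu$ (your $-\mu$, under the opposite sign convention for the drift). So on the sufficiency direction and on~\eqref{eq:drift-together} your outline is correct and essentially what the cited papers do.

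You are also right to flag the necessity and uniqueness assertions as the genuine content: these do not follow from the product-form algebra and require the SRBM recurrence/transience dichotomy (or, equivalently, the positive-recurrence characterization for this particular reflection matrix developed in \cite{PP2008, 5people}). Your sketch of a drift argument for the bottom $k_0$ particles is the right intuition, but making it rigorous is exactly what those references supply; citing them directly, as you suggest, is the appropriate resolution here and is precisely what the paper does.
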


Now, let us define the finite systems that will be used in the proof of Theorem~\ref{thm:main}.
For every $ m \ge 2 $, 
we let $ X^{(m)} = (X^{(m)}_k)_{k=1}^{m^2} $ be a system of $ m^2 $ competing Brownian particles:
\begin{align}
	\label{eq:Xm}
	\md X^{(m)}_i(t) = \Big[ \SL_{i=1}^{m^2} 1\left(\mP_{X^{(m)}(t)}(k) = i\right)g^{(m)}_k \Big]\md t 
	+ \md W_i(t),
	\quad
	i=1,\ldots ,m^2,
\end{align}
with the following drift coefficients:
\begin{align}
	\label{eq:drift-approx}
	g_k^{(m)} &:= 
	\begin{cases}
		g_k,\ k = 1, \ldots, m;
		\\
		b_m,\ k = m + 1, \ldots, m^2,
	\end{cases}
\\
	\label{eq:b-m}
	\text{where } 	b_m &:= -\frac{m^2}{2(m^2 - m)}a - \frac{g_1 + \ldots + g_m}{m^2 - m}.
\end{align}	
This specific choice of $ b_m $ ensures that
$
	\ol{g}^{(m)} := \frac1{m^2} ( g_1^{(m)} + \ldots + g_{m^2}^{(m)} ) = - \frac{a}{2}.
$
Letting 
\begin{align}
	\label{eq:lambdam}
	\lambda^{(m)}_k := 2( g^{(m)}_1 +\ldots+ g^{(m)}_k - k \ol{g}^{(m)}) ,\ \ k = 1, \ldots, m^2-1,
\end{align}
after elementary calculations we get:
\begin{align}
	\label{eq:lambda:1m}
\lambda^{(m)}_k &= \la_k = 2(g_1 + \ldots + g_k) + ak,& &\text{for}\ \ k = 1,\ldots,m,
\\
	\label{eq:lambda:>m}
	\lambda^{(m)}_k &= \tfrac{m^2-k}{m-1}(2\ol{g}_m+a),& &\text{for}\ \ k = m+1,\ldots,m^2-1.
\end{align}
The assumption~\eqref{eq:a:cnd} ensures that $ \lambda^{(m)}_k >0 $, for $ m=1,\ldots,m^2-1 $.
This, by \eqref{eq:lambdam}, is equivalent to $ \ol{g}^{(m)}_k > \ol{g}^{(m)}_{m^2} $,
so by Proposition~\ref{prop:stability},
$ X^{(m)} $ has the following stationary gap distribution:
\begin{equation}
	\label{eq:coincide}
	\pi^{(m)}_a :=
	\bigotimes\limits_{k=1}^{m^2 - 1}\Exp (\la_k^{(m)} ).
\end{equation}

\section{Proof of Theorem~\ref{thm:main}}
\label{sect:pfmain}

For a dimension $ d \ge 1 $ and a $ T\in\BR_+ $, 
let $C([0, T], \BR^d)$ be the space of continuous functions $ [0, T] \to \BR^d $,
and for $d = 1$, we simply write $C[0, T]$.
Hereafter, we endow this space with the standard uniform topology.
Let $ Y^{(m)} = (Y^{(m)}_k)_{k=1}^{m^2} $ and $ Z^{(m)} = (Z^{(m)}_k)_{k=1}^{m^2-1} $
denote the corresponding ranked particles  and the gap process for the system $ X^{(m)} $.
We initiate $ X^{(m)} $ at the stationary gap distribution $ \pi^{(m)}_a $, \eqref{eq:coincide}.
That is, we let
\begin{align*}
	X^{(m)}_1(0) := 0 \leq X^{(m)}_2(0) \leq X^{(m)}_3(0) \leq \ldots;\ \ \mbox{and}\ \ 
	(Z^{(m)}_k(0))_{k=1}^{m^2-1} \sim \pi^{(m)}_a.
\end{align*}

\subsection{Proof of Part~(a)}
{\it{}Step 1.}
Recall the definition of $ \mathcal{V} $ from \eqref{eq:Vspc}.
Let us first prove that the probability distribution $\pi_a$ is supported on $ \mathcal{V} $.
Indeed, denote $ g_* := \sup_{n \ge 1}|g_n|<\infty $ and let $b := 2g_* + a$. 
Then $b > 0$ by \eqref{eq:a:cnd}, and $\la_n := 2(g_1 + \ldots + g_n) + na \le bn$. Therefore, $\la^{-1}_n \ge b^{-1}n^{-1}$. 
For some bounded sequence $(c_n)_{n \ge 1}$ of real numbers,  we get:
$$
\La_n := \SL_{k=1}^n\la_k^{-1} \ge b^{-1}\SL_{k=1}^nk^{-1} = b^{-1}\log n + c_n.
$$
Applying the inequality $(a_1 + a_2)^2 \ge a_1^2/2 - a_2^2$ for all real $a_1, a_2$, we have:  
$$
\La_n^2 \ge \frac1{2b^2}\log^2n - c^2_n \ge b'\log^2n - c'\ \ \mbox{for some constants}\ \ b', c' > 0.
$$
Thus, we have:
\begin{align*}
	\SL_{n=1}^{\infty}e^{-\al \La_n^2} 
	\le 
	\SL_{n=1}^{\infty}\exp\left(-\al b'\log^2n + \al c'\right)< \infty, \text{ for all } \al > 0.
\end{align*}
Applying  \cite[Lemma 4.5]{MyOwn6}, we complete the proof that the distribution $\pi_a$ is supported on $\mathcal V$. 

\medskip

{\it{}Step 2.} For $ n' \geq n $,
we let $ [x]_{\downarrow n}: (x_1,\ldots,x_{n'}) \mapsto (x_1,\ldots,x_n) $ 
denote the projection onto the first $ n $ coordinates.
Fixing arbitrary $ n $ and $ T\in\BR_+ $,
our goal is to show that 
$ [X^{(m)}]_{\downarrow n} $
converges to a limit process $ [ X ]_{\downarrow n} $ as $ m\to\infty $,
such that $ X $ solves \eqref{eq:finite-CBP} and has a stationary gap distribution given by $ \pi_a $.
Toward this end, 
we will need to truncate the large system $ (X^{(m)}_k)_{k=1}^{m^2} $ at some fixed dimension.
This is done with the help of the following lemma.
Hereafter, to simplify notation,
we use the letter $ c $ for \emph{any generic}
positive constant that 
depends only on $ g_1,g_2,\ldots $, $ a $ and $ T $.
Slightly abusing notation,
we use \emph{the same} letter $ c $ even 
if there are multiple such constants within the same formula.

\begin{lemma}\label{lem:Xmtail}
Fix any $ T\in\BR_+ $.
There exists $ c\in(0,\infty) $ 
(depending only on $a, T, g_n,\ n \geq 1$,
as mentioned previously), such that:
\begin{align}
	\label{eq:Xuptail}
	\MP\Big( \sup_{0\leq t \leq T} X^{(m)}_k(t) \geq u \Big) 
	&\leq 
	c e^{ c(ck-u) },
	\quad
	\text{for } k=1,\ldots, m,
	\ u\in\BR,
\\
	\label{eq:Xlwtail}
	\MP\Big( \inf_{0\leq t \leq T} X^{(m)}_k(t) \leq u \Big) 
	&\leq 
	ce^{-c(\log k-u )_+^2}+
	{ck^{-2}e^{cu} },
	\quad
	\text{for } k =1,\ldots, m^2,
	\ u\in\BR.
\end{align}
\end{lemma}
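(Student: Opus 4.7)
The proof rests on the decomposition
$X^{(m)}_k(t) = X^{(m)}_k(0) + A_k(t) + W_k(t)$,
where $A_k(t) := \int_0^t g^{(m)}_{R_k(s)} \md s$ is the accumulated rank-based drift and $R_k(s)$ is the rank of the $k$-th named particle at time $s$. Because $\sup_n|g_n| < \infty$ (consequence of \eqref{eq:drifts}) and $|b_m| \leq \tilde c$ uniformly in $m$ by \eqref{eq:b-m}, one has $|A_k(t)| \leq \tilde c T$. Under the stationary start, $X^{(m)}_k(0) = \sum_{j=1}^{k-1}\xi_j$ with independent $\xi_j\sim\Exp(\lambda^{(m)}_j)$. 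The critical analytical input is a pair of two-sided moment-generating-function estimates:
\begin{align*}
	\ME\bigl[e^{+cX^{(m)}_k(0)}\bigr] \leq k^{C_1}, \quad k\leq m,\ c \text{ small}, \qquad
	\ME\bigl[e^{-cX^{(m)}_k(0)}\bigr] \leq C_2 k^{-2}, \quad k\leq m^2,\ c \text{ large},
\end{align*}
derived from the exponential MGF identity $\ME[e^{\pm c\xi_j}] = \lambda^{(m)}_j/(\lambda^{(m)}_j \mp c)$, the two-sided bound $c_0 j \leq \lambda^{(m)}_j \leq C_0 j$ for $j\leq m$ (the lower bound uses \eqref{eq:a:cnd}), and the explicit formula \eqref{eq:lambda:>m} for $j > m$. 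The second estimate hinges on picking $c$ so that $-\sum_{j=1}^{k-1}\log(1 + c/\lambda^{(m)}_j) \leq -2\log k + O(1)$, which still works for $k > m$ since every extra factor $\lambda^{(m)}_j/(\lambda^{(m)}_j + c) \leq 1$ only improves the product.

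For the upper tail \eqref{eq:Xuptail}, I apply Markov's inequality to the exponential. Since $\sup_{t\leq T} X^{(m)}_k(t) \leq X^{(m)}_k(0) + \tilde c T + \sup_{t\leq T} W_k(t)$, by independence of $W_k$ and the initial gaps,
\begin{align*}
	\MP\Bigl(\sup_{t\leq T} X^{(m)}_k(t) \geq u\Bigr)
	\leq e^{-cu + c\tilde c T}\, \ME\bigl[e^{cX^{(m)}_k(0)}\bigr]\, \ME\bigl[e^{c\sup_t W_k(t)}\bigr]
	\leq C k^{C_1} e^{-cu},
\end{align*}
using the reflection principle for the Brownian supremum. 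The factor $k^{C_1} \leq e^{C_1 k}$ absorbs into $e^{c(ck-u)}$ after increasing constants.

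For the lower tail \eqref{eq:Xlwtail}, I handle the two summands separately. For $ck^{-2}e^{cu}$, apply Markov to $\sup_t e^{-cX^{(m)}_k(t)}$: using $-X^{(m)}_k(t) \leq -X^{(m)}_k(0) + \tilde c T - W_k(t)$, we get $\sup_t e^{-cX^{(m)}_k(t)} \leq e^{-cX^{(m)}_k(0) + c\tilde c T}\,e^{c|\inf_t W_k(t)|}$. The reflection principle yields $\ME[e^{c|\inf_t W_k|}] \leq 2e^{c^2 T/2}$, and combining with the second MGF bound produces the $ck^{-2}e^{cu}$ term. For $ce^{-c(\log k - u)_+^2}$, decompose at a threshold $v$:
\begin{align*}
	\MP\Bigl(\inf_{t\leq T} X^{(m)}_k(t) \leq u\Bigr)
	\leq \MP\bigl(X^{(m)}_k(0) \leq v\bigr) + \MP\Bigl(\sup_{t\leq T}|W_k(t)| \geq v - u - \tilde c T\Bigr).
\end{align*}
The first probability is controlled by sub-exponential/Bernstein concentration: the summands $\xi_j - 1/\lambda^{(m)}_j$ have sub-exponential parameter $O(1/j)$, the sum $X^{(m)}_k(0)$ has variance $O(1)$ and mean $\geq c\log k$ (truncating to the first $\min(k-1,m)$ terms suffices to get the logarithmic lower bound on the mean), so Bernstein gives $\MP(X^{(m)}_k(0) \leq v) \leq Ce^{-c(\log k - v)_+^2}$ throughout the Gaussian moderate-deviation regime. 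The second probability is the standard Gaussian tail $\leq 2e^{-(v-u-\tilde c T)_+^2/(2T)}$. Choosing $v$ roughly as the midpoint of $u$ and $\log k$ balances the two estimates and yields $Ce^{-c(\log k - u)_+^2}$.

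The main obstacle is the uniform-in-$m$ MGF control on $X^{(m)}_k(0)$, particularly for $k$ ranging up through $m^2$, where $\lambda^{(m)}_j$ degenerates as $j \to m^2$. This is what forces the specific parameterization in \eqref{eq:drift-approx}--\eqref{eq:b-m} to work: the precise identity $\ol{g}^{(m)} = -a/2$ ensures that the rates $\lambda^{(m)}_j$ for $j \leq m$ coincide with $\lambda_j$ from \eqref{eq:genSln}, decoupling the bulk behavior from the $m$-dependent tail. Handling the Bernstein concentration uniformly across the transition at $j=m$ is a secondary technical hurdle; the cleanest route is to drop the tail terms (which are nonnegative) and apply concentration only to the nice partial sum $\sum_{j=1}^{\min(k-1,m)}\xi_j$.
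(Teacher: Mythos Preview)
Your overall strategy---bound the drift uniformly, separate the Brownian fluctuation via the reflection principle, and control the initial position through its MGF---is the same as the paper's.  The upper-tail argument is essentially identical; the paper uses the cruder i.i.d.\ dominance $\lambda_j\ge c_*$ to get $\ME e^{cX^{(m)}_k(0)}\le 2^k$, whereas your sharper bound $\ME e^{cX^{(m)}_k(0)}\le k^{C_1}$ (from $\lambda_j\ge c_0 j$) is equally sufficient once absorbed into $e^{ck}$.

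For the lower tail the two proofs diverge.  The paper reduces to the dominated variable $\tilde\xi_k=\sum_{j\le k}\tilde\zeta_j$ with $\tilde\zeta_j\sim\Exp(\tilde c_*j)$, then writes $\ME\,\Psi(\tfrac{\tilde\xi_k-u-\text{const}}{\sqrt T})$ and integrates by parts against the distribution of $\tilde\xi_k$, splitting at the mean $\mu_k$; the Gaussian term comes from $\Psi$ evaluated at $\mu_k$ and the $k^{-2}e^{cu}$ term from a centered-MGF tail estimate on $\tilde\xi_k$.  Your route is more direct: the exponential Markov bound
\[
\MP\Bigl(\inf_{t\le T}X^{(m)}_k(t)\le u\Bigr)\le e^{cu}\,\ME\bigl[e^{-cX^{(m)}_k(0)}\bigr]\cdot e^{c\tilde cT}\cdot\ME\bigl[e^{c|\inf_t W_k(t)|}\bigr]
\le C\,k^{-2}e^{cu}
\]
already yields one of the two summands in \eqref{eq:Xlwtail}, and since the lemma bounds by a \emph{sum} of positive terms, this alone proves the inequality.  (Your observation that extra factors $\lambda^{(m)}_j/(\lambda^{(m)}_j+c)\le1$ for $j>m$ only help is correct; one must choose $c$ so that the product over $j\le m$ is $\le Cm^{-4}$, since $k$ ranges up to $m^2$.)  This is arguably cleaner than the paper's integration-by-parts argument.

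Your Bernstein step, however, does not deliver what you claim.  With $\Var(X^{(m)}_k(0))=O(1)$ and sub-exponential scale $O(1)$, Bernstein gives $\MP(X^{(m)}_k(0)\le v)\le C\exp(-c\min(t^2,t))$ with $t=(\mu_k-v)_+$; once $t$ exceeds a fixed constant you are in the \emph{linear} regime, so after the threshold split you obtain $Ce^{-c(\log k-u)_+}$ rather than $Ce^{-c(\log k-u)_+^2}$.  Fortunately this is irrelevant: the exponential-Markov bound already establishes the full lemma, so the Bernstein portion can simply be dropped.
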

\begin{rmk}
The following proof actually applies even if the term $ k^{-2} $ in \eqref{eq:Xlwtail}
is replaced by $ k^{-\ell} $, for arbitrarily large $ \ell $,
but doing so makes various constants depend also on $ \ell $.
Here we prove \eqref{eq:Xlwtail} only for $ \ell=2 $ as it suffices for our purpose.
\end{rmk}
\begin{proof}
Throughout this proof,
for $ \BR $-valued random variables $ X,Y $,
the notation $ X \succeq Y $ means that $ X $ stochastically dominates $ Y $,
and likewise for $ X \preceq Y $.
Define the standard Gaussian density and the tail distribution function: 
$$
 \psi(y) := \frac{1}{\sqrt{2\pi}} e^{-y^2/2} \ \ \mbox{and}\ \ 
 \Psi(x) := \int_x^{\infty} \psi(y)\, \md y.
$$

We begin by showing \eqref{eq:Xuptail}.
Since $ |g_k| \leq g_* <\infty $,
with $ b_m $ defined in \eqref{eq:b-m}, 
we have that $b_m \to -a/2 $ as $m \to \infty$. 
This implies that $ \{b_m\}_{m \ge 1}$ is bounded, and hence there exists a constant $g_{**}$ such that 
\begin{equation}
	\label{eq:boundedness}
	\mbox{for all}\ \ m \ge 2,\ \ k = 1, \ldots, m^2,\ \ |g_k^{(m)}| \le g_{**}<\infty.
\end{equation}
Consequently,
$ X^{(m)}_k $ solves the equation \eqref{eq:Xm}
with drift coefficient being at most $ g_{**} $,
thereby
\begin{align}
	\label{eq:cmp}
	\MP\Big( \sup_{0\leq t \leq T} X^{(m)}_k(t) \geq u \Big)
	\leq
	\MP\Big( \sup_{0\leq t \leq T} (X^{(m)}_k(0)+W(t)+g_{**}t) \geq u \Big),
\end{align}
where $ W(t) $ is a standard Brownian motion.
Using the reflection principle 
$ \MP( \sup_{0\leq t \leq T} W(t) \geq a ) = 2 \Psi((\frac{a}{\sqrt{t}})_+) $
to bound the l.h.s.\ of \eqref{eq:cmp},
we further obtain
\begin{align*}
	\MP\Big( \sup_{0\leq t \leq T} X^{(m)}_k(t) \geq u \Big)
	\leq
	2 \ME \Psi\Big( \tfrac{u-g_{**}T-X^{(m)}_k(0)}{\sqrt{T}} \Big).
\end{align*}
Now, fix $ k\in\{1,\ldots, m\} $.
By \eqref{eq:lambda:1m} and \eqref{eq:a:cnd}, 
we have that $ \la^{(m)}_k = \lambda_k \geq c_*k \geq c_*  $,
where $ c_* := a + 2\inf_{n \ge 1}\ol{g}_n >0 $.
With this,
letting $ (\zeta_k)_{k=1}^\infty \sim \bigotimes_{k=1}^\infty \Exp(c_*) $,
we have $ X^{(m)}_k(0) \preceq \xi_k := \sum_{j=1}^k \zeta_j $.
Since $ x \mapsto \Psi( \tfrac{u-g_{**}T-x}{\sqrt{T}} ) $ is increasing,
by the preceding stochastic dominance we have
\begin{align}
	\label{eq:xi:bd}
	\MP\Big( \sup_{0\leq t \leq T} X^{(m)}_k(t) \geq u \Big)
	\leq
	2 \ME \Psi\big( \tfrac{1}{\sqrt T}(u-g_{**}T-\xi_k) \big).
\end{align}
For the Gaussian tail function $ \Psi(y) $ 
we have the following elementary inequality
\begin{align}
	\label{eq:Psi:bd}
	\Psi(y) \leq c e^{-(y_+)^2/2} \leq c e^{-cy\sqrt T/2},
\end{align}
where the second inequality follows 
from the fact that Gaussian tails decay faster than \emph{any} exponential tail.
Use this to further bound the r.h.s.\ of \eqref{eq:xi:bd}:
\begin{align*}
	\ME \Big( \Psi( \tfrac{1}{\sqrt T}(u-g_{**}T-\xi_k)  ) \Big) 
	\leq 
	c \ME \Big( e^{-c(u-\xi_k)/2} e^{g_{**}T/2} \Big)
	\leq
	c e^{-cu/2} \prod_{j=1}^k \ME(e^{c\zeta_j/2}),
\end{align*}
and combine this result with \eqref{eq:xi:bd}.
Recall the following elementary formula
\begin{align}
	\label{eq:exp-mgf}
	\ME\left(e^{v\zeta_j}\right) = \frac{c_*}{c_*-v}.
\end{align}
Further using this for $ v=c_*/2 $ (i.e.\ $ \ME(e^{c_*\zeta_j/2}) = 2$), 
we arrive at
\begin{align*}
	\MP\Big( \sup_{0\leq t \leq T} X^{(m)}_k(t) \geq u \Big)
	\leq
	c e^{-c_*u/2} \prod_{j=1}^k \ME(e^{c_*\zeta_j/2})
	=
	c e^{-cu/2} 2^{k}.
\end{align*}
This concludes the desired bound \eqref{eq:Xuptail}.

We now turn to the proof of \eqref{eq:Xlwtail}. 
Similarly to the preceding, here we have
\begin{align*}
	\MP\Big( \inf_{0\leq t \leq T} X^{(m)}_k(t) \leq u \Big)
	\leq
	2 \ME \Psi\Big( \tfrac{X^{(m)}_k(0) -g_{**}T-u}{\sqrt{T}} \Big).
\end{align*}
With $ \la^{(m)}_k $ defined in \eqref{eq:lambda:1m}--\eqref{eq:lambda:>m},
we clearly have that $ \la^{(m)}_k \leq \tilde{c}_* k  $,
for $ \tilde{c}_* := a + 2\sup_n \ol{g}_n <\infty $.
Consequently, 
letting $ (\tilde{\zeta}_k)_{k=1}^\infty \sim \bigotimes_{k=1}^\infty \Exp(\tilde{c}_* k) $,
we have $ X^{(m)}_k(0) \succeq \tilde{\xi}_k := \sum_{j=1}^k \tilde{\zeta}_j $,
and hence
\begin{align}
	\label{eq:xip:bd}
	\MP\Big( \inf_{0\leq t \leq T} X^{(m)}_k(t) \leq u \Big)
	\leq
	2 \ME \Psi\Big( \tfrac{\tilde{\xi}_k -g_{**}T-u}{\sqrt{T}} \Big).	
\end{align}
Fix $ k_* \geq 2/\tilde{c}^2_* $.
We consider the cases $ k \leq k_* $ and $ k > k_* $ separately.
For the former,
as $ x\mapsto \Psi(x) $ is decreasing and $ \tilde{\xi}_k > 0 $,
we bound the r.h.s.\ of \eqref{eq:xip:bd} by 
$ 2 \Psi( \frac{-g_{**}T-u}{\sqrt{T}} ) $.
By \eqref{eq:Psi:bd}, the last expression is bounded by $ c e^{cu} $, so
\begin{align*}
	\MP\Big( \inf_{0\leq t \leq T} X^{(m)}_k(t) \leq u \Big)
	\leq
	\frac{k^2_*}{k^2} e^{cu}
	=
	\frac{ce^{cu}}{k^2},
	\quad
	\text{for } k =1,\ldots,k_*.
\end{align*}
This concludes the desired inequality \eqref{eq:Xlwtail} for $ k\leq k_* $.

\medskip

The case $ k> k_* $ requires more refined estimates.
Fixing $ k\in\{k_*+1,\ldots,m^2\} $,
we begin by establishing a bound on the lower tail of $ \tilde{\xi}_k $.
To this end,
we consider the ``truncated'' variable 
\begin{align} 
	\tilde{\xi}'_k := \tilde{\xi}_k - \tilde{\xi}_{k_*} = \SL_{j=k_*+1}^k\tilde{\zeta}_j,
	\label{eq:xi-k}
\end{align}
together with the centered moment generating function 
\begin{align}
	f_k(v) := \ME\left(e^{v(\tilde{\xi}'_k-\ME(\tilde{\xi}'_k))}\right).
	\label{eq:mgf}
\end{align}
Recall that $\tilde\zeta_j \sim \Exp(\tilde c_*j)$, and $\ME \tilde\zeta_j = (\tilde c_*j)^{-1}$. 
With $ \tilde{\xi}'_k $ defined in \eqref{eq:xi-k}, 
using \eqref{eq:exp-mgf}, we calculate this function~\eqref{eq:mgf} explicitly as
\begin{align*}
	f_k(v) 
 	= 
 	\ME\exp\Bigl( v\SL_{j=k_*+1}^k\left(\tilde{\zeta}_j - \ME\tilde{\zeta}_j\right)\Bigr) 
 	= 
	\prod\limits_{j=k_*+1}^k\left( e^{-v\ME\tilde{\zeta}_j}\ME\left( e^{v\tilde{\zeta}_j}\right)\right)
	= 	
	\prod_{j = k_*+1}^k \frac{e^{-v/(\tilde{c}_*j)}}{1-v/(\tilde{c}_*j)},
\end{align*}
defined for all $ |v| < \tilde{c}_*(k_*+1) $.
We further express this as
\begin{align}
	\label{eq:mmgfxi}
	f_k(v) 
	= 
	\exp \Bigl( \sum_{j = k_*+1}^{k}
	\big( -\log\big(1-\tfrac{v}{\tilde{c}_*j} \big) - \tfrac{v}{\tilde{c}_*j} \big) \Bigr).
\end{align}
To bound the r.h.s.\ of \eqref{eq:mmgfxi},
apply Taylor's formula 
$ f(y) = f(0) + f'(0)y + \int_0^y (y-z)f'(z) dz $ with $ f(y) = \log(1+y)-y $ to obtain
\begin{align*}
	|\log(1+y)-y| 
		=
		\Big|\int_0^y \frac{z}{1+z} dz \Big|
	\leq
	k_* \Big|\int_0^y z dz\Big| \leq cy^2,
	\quad
	\text{for } |y| \leq \frac{k_*}{1+k_*} .
\end{align*}
Apply this inequality for $ y=v/(\tilde{c}_* j) $ in \eqref{eq:mmgfxi}, for $j = k_*+1, \ldots, k$. With $ \sum_{j=1}^\infty j^{-2} <\infty $,
we obtain $ f_k(v) \leq e^{c v^2} $ for $ |v| \leq \tilde{c}_*k_* $.
Combine the result with the Chernov bound
to obtain $ \MP( |\tilde{\xi}'_k-\ME(\tilde{\xi}'_k)| \geq x ) \leq e^{-xv+cv^2} $,
and substitute in $ v=\tilde{c}_*k_* $. 
We arrive at
\begin{align}
	\label{eq:tilxi'}
	\MP( |\tilde{\xi}'_k-\ME(\tilde{\xi}'_k)| \geq x ) 
	\leq 
	e^{ -\tilde{c}_*k_* x } e^{ c(\tilde{c}_*k_*)^2 }
	\leq
	c e^{ -\tilde{c}_*k_* x }.
\end{align}
This yields a tail bound on the variable $ \tilde{\xi}'_k $.
To relate the bound back to a lower tail bound on $ \tilde{\xi}_k  $,
we use $ \tilde{\xi}_k \geq \tilde{\xi}'_k $, followed by using \eqref{eq:tilxi'}, 
whereby obtaining
\begin{align*}
	\MP( \tilde{\xi}_k  \leq x ) 
	\leq 
	\MP( \tilde{\xi}'_k  \leq x ) 
	\leq	
	c e^{ -\tilde{c}_*k_* (\ME(\tilde{\xi}'_k)-x) }.	
\end{align*}
Further, as $ \ME(\tilde{\xi}'_k) $ and $ \ME(\tilde{\xi}_k) $
differ by $ \ME(\tilde{\xi}_{k_*}) \leq c $, we conclude
\begin{align}
	\label{eq:Yntail}
	\MP( \tilde{\xi}_k  \leq x )
	\leq
	c e^{ -\tilde{c}_*k_* (\ME(\tilde{\xi}_k)-x) }.	
\end{align}

Going back to proving \eqref{eq:Xlwtail},
we let $ F_k(x) := \MP(\tilde{\xi}_k \leq x) $ and $ G_k(x) := 1-F_k(x) $ 
denote the cumulative distribution function and the tail distribution function of $ \tilde{\xi}_k $, respectively.
Let $ \mu_k := \ME(\tilde{\xi}_k) $ denote the expected value.
By \eqref{eq:xip:bd} we have
\begin{align*}
	\MP\Big( \inf_{0\leq t \leq T} X^{(m)}_k(t) \leq u \Big)
	&\leq
	\ME \Psi\Big( \tfrac{\tilde{\xi}_k -g_{**}T-u}{\sqrt{T}} \Big)
	=
	\int_{\BR} \Psi(\tfrac{x -g_{**}T-u}{\sqrt{T}}) \md F_k(x)
\\
	&=
	-\int_{[\mu_k,\infty)} \Psi(\tfrac{x -g_{**}T-u}{\sqrt{T}}) \md G_k(x)
	+\int_{(-\infty,\mu_k]} \Psi(\tfrac{x -g_{**}T-u}{\sqrt{T}}) \md F_k(x).
\end{align*}
Apply integration by parts
(with $ f(x) = \Psi(\tfrac{x -g_{**}T-u}{\sqrt{T}}) $). Using $G_k(\infty) = F_k(-\infty) = 0$, we get:
\begin{align}
	\label{eq:IPS1}
	-\int_{[\mu_k,\infty)} f(x) \md G_k(x)
	&=
	f(\mu_k) G_k(\mu_k)
	+	
	\int_{[\mu_k,\infty)} f'(x) G_k(x) \md x,
\\	
	\label{eq:IPS2}
	\int_{(-\infty, \mu_k)} f(x) \md F_k(x)
	&=
	f(\mu_k) F_k(\mu_k)
	-
	\int_{(-\infty,\mu_k)} f'(x) F_k(x) \md x.
\end{align}
With $ f'(x) = - \frac{1}{\sqrt{T}}\psi(\frac{x-g_{**}T-u}{\sqrt{T}})< 0 $,
we drop the last term in \eqref{eq:IPS1}.
Further using $ F_k(\mu_k)+G_k(\mu_k)=1 $, summing \eqref{eq:IPS1}--\eqref{eq:IPS2}, we obtain:
\begin{align}
	\label{eq:Psi:bd:}
	\MP\Big( \inf_{0\leq t \leq T} X^{(m)}_k(t) \leq u \Big)
	\leq
	\Psi(\tfrac{\mu_k -g_{**}T-u}{\sqrt{T}})
	+
	\int_{(-\infty,\mu_k]} \tfrac{1}{\sqrt{T}}\psi(\tfrac{x-g_{**}T-u}{\sqrt{T}}) F_k(x) \md x.
\end{align}
Next, to further bound the r.h.s.\ of \eqref{eq:Psi:bd:},
we first note that
\begin{align}
	\label{eq:muk}
	\mu_k = \tilde{c}_* \sum_{j=1}^k j^{-1} \geq \tilde{c}_* \log k -c.
\end{align}
Using this and \eqref{eq:Psi:bd},
we bound the first term on the r.h.s.\ of \eqref{eq:Psi:bd:} as
\begin{align}
	\label{eq:Psi:bd:1st}
	\Psi(\tfrac{\mu_k -g_{**}T-u}{\sqrt{T}}) \leq ce^{-c(\log k - c - u)_+^2}
	\leq
	ce^{-\frac{c}{2}(\log k - u)_+^2}.
\end{align}
(Here we put $ \frac{c}{2} $ just to clarify that the second inequality
follows by making the constant in the exponential smaller.)
As for the integral term in \eqref{eq:Psi:bd:}, 
we use the tail estimate \eqref{eq:Yntail} to bound $ F_k(x) $ by $ ce^{-\tilde{c}_*k_*(\mu_k-x)} $,
and replace the integral over $ (-\infty,\mu_k] $ by an integral over the entire $ \BR $,
followed by the change of variable $ \frac{\mu_k-x}{\sqrt T}\mapsto x $.
This yields
\begin{align*}
	\int_{(-\infty,\mu_k]} \tfrac{1}{\sqrt{T}}\psi(\tfrac{x-g_{**}T-u}{\sqrt{T}}) F_k(x) \md x
	\leq
	c\int_{\BR} \psi\big(\tfrac{\mu_k-g_{**}T-u}{\sqrt{T}}-x \big) e^{-\tilde{c}_*k_*\sqrt{T}x} \md x.
\end{align*}
The last integral is explicitly evaluated to be
$ 
	e^{(\tilde{c}_*k_*\sqrt{T})^2/2} e^{-\tilde{c}_*k_*(\mu_k-g_{**}T-u)} 
	\leq
	c e^{-\tilde{c}_* k_*(\mu_k-u)}.
$
Combining this with the estimate of $ \mu_k $ \eqref{eq:muk}, 
followed by using $ \tilde{c}^2_* k_* \geq 2 $,
we conclude
\begin{align}
	\label{eq:Psi:bd:2st}
	\int_{(-\infty,\mu_k]} \tfrac{1}{\sqrt{T}}\psi(\tfrac{x-g_{**}T-u}{\sqrt{T}}) F_k(x) \md x
	\leq
	ce^{-\tilde{c}^2_* k_*\log k +\tilde{c}_* k_* u}
	\leq
	ck^{-2} e^{cu}.
\end{align}
Inserting \eqref{eq:Psi:bd:1st}--\eqref{eq:Psi:bd:2st} into \eqref{eq:Psi:bd:},
we conclude the desired estimate \eqref{eq:Xlwtail} for $ k > k_* $.
\end{proof}

{\it{}Step 3.} We now return to showing the convergence of $ [X^{(m)}]_{\downarrow n} $.
For $ f \in C([0,T],\BR^n) $, we let 
\begin{align*}
	\text{osc}_\delta(f):= 
	\sup \big\{ \Vert f(t)-f(s) \Vert_2 \ \big| \ s,t\in[0,T], |t-s|\leq\delta \big\}
\end{align*}
denote the modulus of continuity of $f$, measured in the Euclidean norm $ \Vert x \Vert_2 := \sqrt{x_1^2+\ldots+x_n^2} $.
Since $ X^{(m)} = (X^{(m)}_i)_{i=1}^{m^2}$ solves the equation \eqref{eq:Xm} 
with drift coefficients bounded by $ g_{**} $,
we have
\begin{align}
	\label{eq:osc}
	\sup_{m \geq 1}
	\MP\Big( \text{osc}_{\delta} \big( \, [X^{(m)}]_{\downarrow n} \, \big) \geq \eps \Big)
	\to 0,
	\text{ as } \delta \to 0,
\end{align}
for any fixed $ \eps>0 $.
With this, by Arzel\`{a}--Ascoli theorem, 
it is standard to show that $ \{ [X^{(m)}]_{\downarrow n} \}_{m \geq 1} $
is tight in $ C([0,T],\BR^n) $.
By the Skorohod representation theorem,
after passing to a subsequence and a different probability space,
we have
\begin{align}
	[X^{(m)}_k]_{\downarrow n} \to (X_k)_{k=1}^n  \text{ in }  C([0,T],\BR^n),
	\text{ as } m\to\infty,
	\quad
	\text{a.s.}
\label{eq:X-named-conv}
\end{align}
The limit process $ X  = (X_i)_{i \ge 1}$ 
is taken to be independent of $ T$ and $ n $
by a standard diagonal argument.

\medskip

{\it{}Step 4.} 
We now proceed to show that $ X $ has gap distribution $ \pi_a $. 
Fix $T > 0$ and $n = 1, 2, \ldots$. 
We do this by first establishing 
the convergence of $ [Y^{(m)}]_{\downarrow n} $.
By \eqref{eq:Xuptail} we have
\begin{align*}
	\MP\Big( \max_{k=1,\ldots,n} \sup_{t\leq T} X^{(m)}_k(t) > u \Big) 
	\leq
	\sum_{k=1}^n \MP\Big( \sup_{t\leq T} X^{(m)}_k(t) > u \Big) 
	\leq
	cne^{c(cn-u)}\to 0,
	\quad
	\text{ as } u \to \infty.
\end{align*}
Fix an arbitrarily small $ \eps >0 $.
With $ n,T $ already being fixed,
we now choose a sufficiently large $ u\in\BR_+ $ such that
\begin{align}
	\label{eq:rk:trnc}
	\MP\big( X^{(m)}_k(t) \leq u, \ \forall k=1,\ldots,n, \ t \leq T \big) \geq 1 -\eps/2,
	\quad
	\text{ for all } m \geq n.
\end{align}
That is, with probability at least $ 1-\eps/2 $, 
the first $ n $ named particles $ X^{(m)}_1,\ldots, X^{(m)}_n $
always stay below the level $ u $ within the time interval $ [0,T] $.
With this $ u $, we further apply \eqref{eq:Xlwtail} to obtain
\begin{align}
	\notag
	\MP\Big( \min_{n' \leq k \leq m^2}  \inf_{0\leq t\leq T} X^{(m)}_k(t) \leq u \Big) 
	&\leq
	\sum_{k =n'}^{m^2} \MP\Big( \inf_{0\leq t\leq T} X^{(m)}_k(t) \leq u \Big) 
\\
	\label{eq:rk:trnc::}
	&\leq 
	c \sum_{k =n'}^\infty (e^{-c(\log k - u)_+^2}+{k^{-2}e^{cu}}).
\end{align}
Since $ \sum_{k =1}^\infty (e^{-c(\log k - u)_+^2}+k^{-2}e^{cu}) <\infty $,
the last expression in \eqref{eq:rk:trnc::} clearly tends to zero as $ n'\to\infty $. 
Fix some $ \tilde{n} \geq n $ such that
\begin{align}
	\label{eq:rk:trnc:}
	\MP\big( X^{(m)}_k(t) \geq u, \ \forall k > \tilde{n}, \ t \leq T \big) \geq 1 -\eps/2.
\end{align}
That is, with probability at least $ 1-\eps/2 $,
none of the name particles $ X_{\tilde{n}+1}, X_{\tilde{n}+2}, \ldots $
ever reaches a level below $ u $ within $ [0,T] $.
Let $ \mathcal{R} : \BR^{\tilde{n}} \to \BR^{\tilde{n}} $,
$ \mathcal{R}(x) := (x_{\mP_x(i)})_{i=1}^{\tilde{n}} $,
denote the ranking map of an $ \tilde{n} $-tuple.
By \eqref{eq:rk:trnc} and \eqref{eq:rk:trnc:},
we have that
\begin{align*}
	\MP \Big( 
		\big[ \, \mathcal{R} \big( [X^{(m)}(t)]_{{\downarrow \tilde{n}}} \big) \, \big]_{\downarrow n} 
		= [Y^{(m)}(t)]_{\downarrow n},\ \forall t\leq T 
	\Big)
	\geq 1 -\eps.
\end{align*}
Namely, with probability at least $ 1-\eps $, to obtain the first $ n $ ranked particles 
$ Y^{(m)}_1(t), \ldots , Y^{(m)}_n(t) $ within the system of $m^2$ particles 
$X_1^{(m)}, \ldots, X_{m^2}^{(m)}$, 
it suffices to rank \emph{only} $ X^{(m)}_1(t),\ldots, X^{(m)}_{\tilde{n}}(t) $ (as opposed to ranking all $m^2$ named particles), 
and take the first $ n $ resulting particles.
Because the map $ \mathcal{R} $ is continuous, and 
$ [X^{(m)}]_{\downarrow \tilde{n}} \to [X]_{\downarrow \tilde{n}} $ in $ C([0,T],\BR^{\tilde{n}}) $
(from~\eqref{eq:X-named-conv}),
it follows that 
\begin{align}
	\label{eq:Ym:cnvg}
	[Y^{(m)}]_{\downarrow n} \to [Y]_{\downarrow n}
	\text{ in } C([0,T],\BR^{n}) \text{ a.s., } 
	\
	\text{where}
	\
	Y_k(t) := X_{\mP_{X(t)}(k)}(t).
\end{align}
Further, since we have $ (Z^{(m)}_k(t))_{k=1}^m \sim \pi^{(m)}_a $,
letting $ m\to\infty $ we further obtain that
\begin{align*}
	(Z_k(t))_{k=1}^\infty \sim \bigotimes_{k=1}^\infty \Exp(\lambda_k),
	\ 
	\forall t \in \BR_+,
	\quad
	\text{where } Z_k(t) := Y_{k+1}(t) - Y_k(t).
\end{align*}
We have thus concluded that
the gap process $Z(t)$ of the system $ X $ is distributed according to $ \pi_a $ 
for all $ t\in\BR_+ $.

\medskip

{\it{}Step 5.} Finally, we still need to show that $ X $ solves \eqref{eq:infinite-CBP}.
Doing so requires first showing that $ Y $ solves the corresponding equation 
\eqref{eq:infinite-CBP:rk}.
Indeed, the ranked process $Y^{(m)}$ solves the following finite system of SDEs:
\begin{equation}
	\label{eq:Y-k}
	Y_k^{(m)}(t) 
	= 
	Y^{(m)}_k(0) + g_k^{(m)}t + B^{(m)}_k(t) 
	+ \tfrac12L_{(k-1, k)}^{(m)}(t) - \tfrac12L_{(k, k+1)}^{(m)}(t),\ 
	\ k = 1, \ldots, m^2,
\end{equation}
where the local time $L^{(m)}_{(k, k+1)}$ and
the Brownian motion $ B^{(m)}_k $ are defined in Section~\ref{sect:finite}.
Note that although we take $ (W_k)_{k=1}^\infty $ to be fixed (i.e.\ independent of $ m $),
the Brownian motions $ B^{(m)}_k $ (defined in \eqref{eq:Bk})
still depend on $ m $. 
However, with $ [B^{(m)}]_{\downarrow n} $ being tight in $ C([0,T],\BR^n) $,
applying again the Skorohod representation theorem
(after passing to a finer subsequence and yet another probability space),
we assume without lost of generality $ [B^{(m)}]_{\downarrow n} \to [B]_{\downarrow n} $ in $ C([0,T],\BR^n) $,
where $ B(t)=(B_k(t))_{k=1}^\infty $, 
and $ B_k(t) $, $ k=1,2,\ldots $ are independent standard Brownian motions.

Now, as we already have that $ Y^{(m)} $ and $ B^{(m)} $ converge,
taking $ m\to\infty $ in \eqref{eq:Y-k} for $ k=1 $ yields
\begin{align*}
	\tfrac12L_{(1, 2)}^{(m)}
	\to
	\tfrac12L_{(1, 2)}
	\text{ in }
	C[0,T] \text{ a.s.},
	\
	\text{ where }
 \tfrac12L_{(1, 2)}(t)
	:=
	- Y_1(t) + Y_1(0) + g_1t + B_1(t).
\end{align*}
Performing this $ m\to\infty $ procedure inductively for $ k=2,3,\ldots $,
we further obtain
\begin{align*}
	&
	\tfrac12L_{(k, k+1)}^{(m)}
	\to
	\tfrac12L_{(k, k+1)}
	\text{ in }
	C[0,T] \text{ a.s.},
\end{align*}
where $ L_{(k,k+1)}(t) $ is defined inductively through the following relation
\begin{align*}
 \tfrac12L_{(k, k+1)}(t) :=
\tfrac12L_{(k-1, k)}(t)
	- Y_k(t) + Y_k(0) + g_kt + B_k(t).
\end{align*}
Next, each $L_{(k, k+1)}$ is continuous, nondecreasing, and starts from zero: $L_{(k, k+1)}(0) = 0$. 
This is so because each $ L^{(m)}_{(k,k+1)} $ has all these properties, 
and they are preserved in limits under the uniform topology of $ C[0,T] $. 
Furthermore, $L_{(k, k+1)}$ increases only when $Y_k = Y_{k+1}$. 
To see this,
we consider a generic $ t\in[0,T] $
such that $ Y_k(t) < Y_{k+1}(t) $.
By the continuity of $ Y_k(t) $ and $ Y_{k+1}(t) $,
we must also have $ Y_k(s) < Y_{k+1}(s) $ for $ s \in [t', t''] $,
for some $ t' <t''\in[0,T] $.
With this, for all large enough $ m $, 
we have $ Y^{(m)}_k(s) < Y^{(m)}_{k+1}(s)$, $ s \in [t', t'']$. 
From the properties of the local time for finite systems, we get: $ L_{(k, k+1)}^{(m)}(t') = L_{(k, k+1)}^{(m)}(t'') $.
Letting $ m \to \infty $ yields $ L_{(k, k+1)}(t') = L_{(k, k+1)}(t'') $, 
which proves that $L_{(k, k+1)}$ increases only when $ Y_k = Y_{k+1} $. 
With the aforementioned properties of $ L_{(k,k+1)} $,
we conclude that $ L_{(k,k+1)} $ is the local time 
of collision between $ Y_{k} $ and $ Y_{k+1} $,
and hence that $ Y $ solves \eqref{eq:infinite-CBP:rk}.

We now return to proving that $ X $ solves \eqref{eq:infinite-CBP}.
This is done by taking the $ m\to\infty $ limit
of the finite system of equations \eqref{eq:Xm}
similarly to the way we did it for $ Y^{(m)} $.
However, unlike \eqref{eq:Y-k},
the diffusion coefficients in \eqref{eq:Xm} 
are generally discontinuous due the exchange of ranks. 
We resolve this problem following \cite{MyOwn6}, by first showing:

\begin{lemma}
Define the following random set: 
$$
	\CN := \{t \in [0, T]\mid Y(t) \text{ has a tie}\,\} 
	= \{t \in [0, T]\mid\, \exists k \ge 1:\, Y_k(t) = Y_{k+1}(t)\}.
$$
Then $\MP$-a.s.\ the Lebesgue measure of $ \CN $ is equal to zero.
\label{lemma:N-zero}
\end{lemma}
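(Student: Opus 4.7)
My plan is to reduce the statement to a single-gap fact and invoke the occupation-time formula for continuous semimartingales. Writing $\CN = \bigcup_{k \ge 1}\CN_k$ with $\CN_k := \{t \in [0,T] : Z_k(t) = 0\}$, and noting that a countable union of Lebesgue-null sets is Lebesgue-null, it suffices to show that for each fixed $k \ge 1$, the set $\CN_k$ has Lebesgue measure zero almost surely.

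Fix $k \ge 1$. Using the SDE~\eqref{eq:infinite-CBP:rk} for $Y$ that was just established in Step~5, the $k$th gap $Z_k = Y_{k+1}-Y_k$ satisfies
\begin{align*}
	\md Z_k(t)
	&= (g_{k+1}-g_k)\md t
	+ \md(B_{k+1}-B_k)(t)
\\	&\quad
	+ \md L_{(k,k+1)}(t)
	- \tfrac12 \md L_{(k-1,k)}(t)
	- \tfrac12 \md L_{(k+1,k+2)}(t),
\end{align*}
so $Z_k$ is a continuous semimartingale. Its quadratic variation comes solely from the martingale part $B_{k+1}-B_k$, since the local-time terms are of bounded variation; because $B_{k+1}$ and $B_k$ are independent standard Brownian motions, $\langle Z_k\rangle_t = 2t$.

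I then invoke the occupation-time formula: there exists a jointly measurable local-time field $(\ell^a_T)_{a \in \BR}$ for the semimartingale $Z_k$ such that, for every nonnegative Borel function $f$,
\begin{align*}
	\int_0^T f(Z_k(t))\,\md\langle Z_k\rangle_t
	= \int_\BR f(a)\,\ell^a_T\,\md a.
\end{align*}
Taking $f(a) = 1(a = 0)$ and using $\md \langle Z_k\rangle_t = 2\,\md t$, the right-hand side vanishes (integration of a bounded function over a single point), so
\begin{align*}
	2\int_0^T 1(Z_k(t) = 0)\,\md t = 0,
\end{align*}
which is exactly the statement that $\CN_k$ has zero Lebesgue measure almost surely. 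Taking the union over the countably many $k$ yields the lemma.

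I do not anticipate a serious obstacle: the SDE~\eqref{eq:infinite-CBP:rk} for $Y$, together with the identification of $L_{(k,k+1)}$ as the local time of $Z_k$ at zero, is already in hand from Step~5, and the remainder is a standard application of the occupation-time formula for continuous semimartingales. The only minor care is to observe that the (bounded-variation) local-time contributions to $\md Z_k$ do not affect $\langle Z_k\rangle$, leaving a nondegenerate Brownian quadratic variation so that the occupation-time identity forces $\{Z_k(t) = 0\}$ to be time-negligible.
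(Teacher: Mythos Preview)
Your proof is correct, but it follows a different route from the paper's.

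The paper does not argue gap-by-gap via the occupation-time formula. Instead, it first establishes a \emph{local finiteness} statement: for every level $u\in\BR$ and every $T\in\BR_+$,
\[
	\inf_{0\le t\le T} Y_k(t) < u \quad\text{for only finitely many } k,\ \text{a.s.},
\]
by combining the tail bound~\eqref{eq:Xlwtail} with $Y^{(m)}_k\to Y_k$. This reduces the infinite ranked system to a finite one on any bounded window, after which the paper invokes \cite[Lemma~3.9]{MyOwn6} to conclude that ties have zero Lebesgue measure.

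Your argument is more direct and self-contained: once the SDE~\eqref{eq:infinite-CBP:rk} for $Y$ is in hand (Step~5), each $Z_k$ is a continuous semimartingale with $\langle Z_k\rangle_t=2t$, and the occupation-time formula immediately kills the level set $\{Z_k=0\}$. This avoids both the tail estimate and the external citation. On the other hand, the paper's detour through local finiteness is not wasted effort: the statement that only finitely many ranked particles visit any given level is exactly what one needs afterwards to pass from the ranked SDE to the named SDE~\eqref{eq:infinite-CBP} (this is the content of the argument imported from \cite{MyOwn6} at the very end of Step~5). So while your proof of the lemma itself is cleaner, the paper's proof produces a byproduct that feeds the next step.
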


\begin{proof} 
As $ Y $ solves \eqref{eq:infinite-CBP:rk} (as proven above),
the desired result follows once we show that 
the infinite system \eqref{eq:infinite-CBP:rk} can be reduced
to a \emph{finite system} at any given level $ u\in\BR $.
More precisely, fixing arbitrary $ u\in\BR $ and $ T\in\BR_+ $,
we aim at showing
\begin{align}
	\label{eq:trunc:}
	\inf_{0 \le t \le T}Y_k(t) < u,
	\
	\text{ for only finitely many } k,
	\quad
	\text{a.s.}
\end{align}
Once this is established,
the desired result follows by the same argument in \cite[Lemma 3.9]{MyOwn6}.
Turning to showing \eqref{eq:trunc:},
because $Y_k(t) \le Y_{k+1}(t)$ a.s., we need only to show that 
\begin{equation}
	\label{eq:05}
	\MP\left(\inf_{0 \le t \le T}Y_k(t) < u\right) \to 0,\ \ k \to \infty.
\end{equation}
As $ Y^{(m_)}_k \to Y_k $ in $ C[0, T] $,
and the set $ \{y(\Cdot)\,|\, \inf_{0 \le t \le T} y(t) < u \} $ is open in $ C[0, T] $,
we have
\begin{equation}
	\label{eq:06}
	\MP\left(\inf_{0 \le t \le T}Y_k(t) < u\right) 
	\le 
	\varliminf_{m \to \infty}
	\MP\left(\inf_{0 \le t \le T}Y^{(m)}_k(t) < u\right).
\end{equation}
Next, since $ Y^{(m)}_k(t) $ is the $ k $th ranked particle in $ X^{(m)}(t) $,
it follows that
\begin{align*}
	Y^{(m)}_k(t) \geq \min_{j=k,\ldots,m^2} X^{(m)}_j(t),\ \ 
\mbox{so}\ \ 
	\MP\Big( \inf_{0 \le t \le T} Y^{(m)}_k(t) < u \Big) 
	\leq 
	\sum_{j=k}^{m^2} \MP\Big( \inf_{0 \le t \le T} X^{(m)}_k(t) < u \Big).
\end{align*}
Now, applying \eqref{eq:Xlwtail} to bound the r.h.s., 
we arrive at
\begin{align}
	\label{eq:Y:trun}
	\MP\Big( \inf_{0 \le t \le T} Y^{(m)}_k(t) < u \Big) 
	\leq 
	c\sum_{j=k}^{m^2} (e^{-c(\log j-u)_+^2}+ j^{-2}e^{cu} )
	\leq 
	c\sum_{j=k}^{\infty} (e^{-c(\log j-u)_+^2}+ j^{-2}e^{cu} ).
\end{align}
The last term in \eqref{eq:Y:trun} is independent of $ m $ and tends to zero as $ k\to\infty $
(as explained previously after \eqref{eq:rk:trnc::}).
Consequently,
inserting \eqref{eq:Y:trun} into \eqref{eq:06} yields the desired result \eqref{eq:05}.
\end{proof}
\noindent
With this Lemma~\ref{lemma:N-zero}, the rest of the proof 
follows by the same argument to the end of the proof of \cite[Theorem 3.3]{MyOwn6}, 
starting from \cite[Lemma 6.5]{MyOwn6} up to the end of the proof of this theorem.
That is, as $ m\to\infty $, the solution $ X^{(m)}_k $
of the finite system \eqref{eq:finite-CBP}
converges to a solution of the infinite system \eqref{eq:infinite-Atlas}.

\subsection{Proof of Part~(b)}
Fix $ t\in\BR_+ $ and an integer $ k $. 
It follows from Proposition~\ref{prop:stability} that 
\begin{align}
	\label{eq:Ym:exp}
	\ME\big( Y^{(m)}_k(t) - Y_k^{(m)}(0) \big) = -\tfrac{a}{2}t ,
\end{align}
because $\ol{g}^{(m)} = -a/2$ for all $m$.
Our goal is to pass \eqref{eq:Ym:exp} to the limit $ m\to\infty $.
To this end,
since we already have 
$Y_k^{(m)}(s) \to Y_k(s) $ a.s.\ for $ s=t $ and $ s=0 $,
it suffices to establish the $ L^2 $-boundedness of $ \{ Y^{(m)}_k(t)\}_{m \geq 1} $:
\begin{equation}
	\label{eq:13}
	\sup_{m\geq 1} \ME\big( Y^{(m)}_k(t) \big)^2 <\infty.
\end{equation}
Indeed, \eqref{eq:13} guarantees the uniform integrability of $ \{ Y^{(m)}_k(t)\}_{m \geq 1} $,
so almost sure convergence implies convergence of expectations.
Further, since
\begin{align*}
	Y^{(m)}_k(t) = Y^{(m)}_1(t) + Z_{k-1}^{(m)}(t) + \ldots + Z_1^{(m)}(t)
\end{align*}
and $ Z_{k}^{(m)}(t) \sim \Exp(\lambda_k) $ for $ k \leq m $,
\eqref{eq:Ym:exp} clearly follows once we prove it for $ k=1 $.

Proceeding to prove \eqref{eq:Ym:exp} for $ k=1 $,
we recall that $ \preceq $ denotes stochastic dominance.
Apply comparison techniques from \cite[Corollary 3.7]{MyOwn2} to obtain that, 
for all $m \ge 1$, 
$ Y^{(m)}_1(t) \preceq g_1t + W(t) $,
where $ W $ is some standard Brownian motion.
From this it follows that 
\begin{align}
	\label{eq:Yup}
	\sup_{m \geq 1} \ME\big( Y^{(m)}_k(t)_+ \big)^2 \le \ME\big( (g_1t + W(t))_+\big)^2 <\infty.
\end{align}
Next, to bound $ \ME(Y^{(m)}_k(t)_-)^2 $,
we fix $ u \in \BR_+ $ and write
\begin{align}
	\label{eq:Y1lw}
	\MP( Y^{(m)}_1(t) \leq -u )
	=
	\MP\Big( X^{(m)}_k(t) \leq -u \Big)
	\leq
	\sum_{ k = 1 }^{\infty} \MP\Big( \inf_{k \geq 1} X^{(m)}_k(t) \leq -u \Big).
\end{align}
Combining this with \eqref{eq:Xlwtail}, followed by using
$ e^{-c(u+\log k)^2} \leq e^{-cu^2-c(\log k)^2} $,
we obtain
\begin{align}
	\label{eq:last}
	\MP( Y^{(m)}_1(t) \leq -u )
	\leq&
	c \sum_{ k = 1 }^{\infty} (e^{-cu^2-c(\log k)^2}+k^{-2}e^{-cu})\leq
	cS_1e^{-cu^2} + cS_2e^{-cu},
\\
	\label{eq:two-series}
	&\text{where } S_1 := \SL_{k=1}^{\infty}e^{-c(\log k)^2},\ \ S_2 := \SL_{k=1}^{\infty}k^{-2}.
\end{align}
As both these series in~\eqref{eq:two-series} converge, 
\eqref{eq:last} shows that $ Y^{(m)}_1(t) $ has an exponential lower tail which is uniformly in $ m $,
so in particular $ \sup_{m \geq 1} \ME(Y^{(m)}_1(t)_-)^2 <\infty $.
Combining this with \eqref{eq:Yup} yields the desired result \eqref{eq:13}.

\appendix
\section{}
\label{sect:appd}

Here we provide bounds on the number of particles under the measure $ \pi_a $.
\begin{prop}
\label{prop:singular}
Fix $ g_1,g_2,\ldots $ satisfying the condition~\eqref{eq:drifts-stable}
and fix $ a\in\BR $ satisfying~\eqref{eq:a:cnd}. Let 
\begin{align*}
	0 = \xi_1 < \xi_2 < \xi_3 < \ldots \in \BR_+ 
\end{align*}
be a random configuration of points with the gap distribution
$ (\zeta_k:=\xi_{k+1}-\xi_{k})_{k=1}^\infty \sim \pi_a $.
Let 
$	N(x) := \#\left\{i \ge 1\mid \xi_i \le x\right\} $
denote the random number of $ \xi_k $-particles on the interval $ [0, x] $.
Then 
\begin{align*}
	0 < \liminf_{x\to\infty} e^{-ax} N(x)
	\leq
	\limsup_{x\to\infty} e^{-ax} N(x) < \infty\ \ \mbox{a.s.}
\end{align*}
\end{prop}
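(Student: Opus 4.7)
The plan is to reduce the asymptotics of $N(x)$ to those of $\xi_n$, by exploiting the fact that under~\eqref{eq:drifts-stable} the rates $\lambda_k = 2(g_1+\ldots+g_k)+ka$ behave like $ka$ for large $k$. The payoff will be a strong law of the form $a\xi_n - \log n \to V$ a.s., where $V$ is an a.s.\ finite random variable, after which $N(x)$ is handled by a standard inversion.

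First, by~\eqref{eq:drifts-stable}, there exists $n_0$ such that $\lambda_k = S + ka$ for all $k \ge n_0$, where $S := 2(g_1 + \ldots + g_{n_0-1})$. In particular $\lambda_k \sim ka$ as $k \to \infty$, so by comparison with the harmonic series
\begin{align*}
	\EE(\xi_n) = \SL_{k=1}^{n-1} \frac{1}{\lambda_k} = a^{-1} \log n + c + o(1), \quad n\to\infty,
\end{align*}
for some constant $c \in \BR$ depending on $g_1,\ldots,g_{n_0-1}$ and $a$. The main input for the a.s.\ behaviour is that
\begin{align*}
	\SL_{k=1}^\infty \Var(\zeta_k) = \SL_{k=1}^\infty \lambda_k^{-2} < \infty,
\end{align*}
again because $\lambda_k \sim ka$. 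Since $\zeta_1, \zeta_2, \ldots$ are independent with finite variances summing to a finite value, Kolmogorov's convergence theorem yields that $\SL_{k=1}^\infty (\zeta_k - \EE \zeta_k)$ converges a.s.\ to some a.s.-finite random variable $W$. Combining this with the preceding expansion of $\EE(\xi_n)$ gives, a.s.,
\begin{align*}
	a \xi_n - \log n \longrightarrow V := aW + ac, \qquad \text{equivalently } \quad n\, e^{-a\xi_n} \longrightarrow e^{-V},
\end{align*}
where $V$ is a.s.\ finite and $e^{-V} \in (0,\infty)$ a.s.

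Finally, I would transfer this from $\xi_n$ to $N(x)$ using the sandwich $\xi_{N(x)} \le x < \xi_{N(x)+1}$, valid since $N(x)$ counts the $\xi_i$'s in $[0,x]$. Note $N(x) \to \infty$ a.s.\ as $x \to \infty$ because $\xi_n \to \infty$ a.s.\ (its expectation diverges and the centred series converges). Applying the limit $n e^{-a \xi_n} \to e^{-V}$ along the random subsequences $n = N(x)$ and $n = N(x)+1$, the inequality $\xi_{N(x)} \le x$ gives
\begin{align*}
	N(x) e^{-ax} \le N(x) e^{-a \xi_{N(x)}} \longrightarrow e^{-V} < \infty \quad \text{a.s.},
\end{align*}
while $x < \xi_{N(x)+1}$ gives
\begin{align*}
	N(x) e^{-ax} \ge \tfrac{N(x)}{N(x)+1} \cdot (N(x)+1) e^{-a\xi_{N(x)+1}} \longrightarrow 1 \cdot e^{-V} > 0 \quad \text{a.s.},
\end{align*}
which together establish the claimed two-sided bound (in fact $N(x) e^{-ax}$ converges a.s.\ to $e^{-V}$).

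The main obstacle is conceptual rather than technical: recognising that the $\ell^2$-summability of $\lambda_k^{-1}$ promotes the expectation-level heuristic $\EE L_n \sim a^{-1} \log n$ (sketched after Theorem~\ref{thm:main}) to an a.s.\ statement via Kolmogorov's theorem, with the fluctuation absorbed into the random shift $V$. Everything else amounts to routine asymptotic estimates on $\EE(\xi_n)$ and a standard inversion sandwich to pass from $\xi_n$-asymptotics to $N(x)$-asymptotics.
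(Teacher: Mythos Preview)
Your proof is correct and follows essentially the same route as the paper: both use $\sum_k \lambda_k^{-2}<\infty$ together with Kolmogorov's theorem (cited in the paper as \cite[Theorem~1.4.2]{StroockBook}) to control $\xi_n - a^{-1}\log n$, and then invert to $N(x)$. You in fact obtain the slightly sharper conclusion $N(x)e^{-ax}\to e^{-V}$ a.s.\ (the paper only records $\sup_n|\xi_n-a^{-1}\log n|<\infty$ and leaves the inversion as ``clearly implies''), and you spell out the sandwich $\xi_{N(x)}\le x<\xi_{N(x)+1}$ explicitly; one cosmetic remark is that $N(x)\to\infty$ follows from each $\xi_n$ being a.s.\ finite rather than from $\xi_n\to\infty$.
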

\begin{proof} 
Let $ \lambda_k := 2(g_1+\ldots+g_k) + ka $.
Under the conditions~\eqref{eq:drifts-stable} and \eqref{eq:a:cnd},
we have
\begin{align}
	\label{eq:zeta:av}
	\sum_{k=1}^n \ME \zeta_k 
	= 
	\sum_{k=1}^n \frac1{\la_k} 
	= a^{-1} \log n + c_n,
\end{align}
where $(c_k)_{k \ge 1}$ is a bounded deterministic sequence,
and $ \sum_{k=1}^\infty \Var( \zeta_k) = \sum_{k=1}^n \frac1{\la^2_k} < \infty $.
With the last condition,
\cite[Theorem 1.4.2]{StroockBook} implies that the series 
$ \sum_{k=1}^{\infty}(\zeta_k - \ME \zeta_k) $ converges a.s.
Combining this with \eqref{eq:zeta:av} yields
\begin{align*}
	 \sup_{n\geq 1}\Bigl|\xi_n - a^{-1}\log n  \Big| <\infty
	\quad
	\text{a.s.,}
\end{align*}
which clearly implies the desired result. 
\end{proof}

We next show that the measures $ \pi_a $ are all mutually singular for different values of $a$. 
\begin{prop}
\label{prop:dens}
Fixing $ g_1,g_2,\ldots $ satisfying the condition~\eqref{eq:drifts} and 
$ a> a' >-2\inf_n \bar{g}_n  $,
we have that the measures $ \pi_a $ and $ \pi_{a'} $ are mutually singular.
\end{prop}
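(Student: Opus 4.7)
The idea is to distinguish $\pi_a$ from $\pi_{a'}$ via the asymptotic growth of the counting function $N(x) := \#\{k \ge 1 : \xi_k \le x\}$ (with $\xi_k := \zeta_1 + \ldots + \zeta_{k-1}$) appearing in Proposition~\ref{prop:singular}. Heuristically, a sample from $\pi_b$ has density $\sim e^{bx}$ at infinity, so samples from $\pi_a$ and $\pi_{a'}$ live on disjoint sets once $a \neq a'$.

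First I would upgrade Proposition~\ref{prop:singular} from its stated hypothesis~\eqref{eq:drifts-stable} to the weaker hypothesis~\eqref{eq:drifts} actually imposed here. Inspecting the proof, the only properties of $\la_k = 2(g_1 + \ldots + g_k) + ka$ that are used are $\sum_{k \ge 1}\la_k^{-2} < \infty$ and $\sum_{k=1}^n \la_k^{-1} = a^{-1}\log n + O(1)$. Under~\eqref{eq:drifts}, Cauchy--Schwarz gives $g_1 + \ldots + g_k = O(\sqrt k)$, whence $\la_k = ka(1 + O(k^{-1/2}))$, and both estimates follow by routine calculation. Consequently, for every $b > -2\inf_n \ol{g}_n$,
\[
	0 < \liminf_{x \to \infty} e^{-bx} N(x) \le \limsup_{x \to \infty} e^{-bx} N(x) < \infty \quad \pi_b\text{-a.s.}
\]

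Next, working on the gap space $\BR^{\infty}_+$, I define the measurable event
\[
	A := \Bigl\{\, (\zeta_k)_{k \ge 1} \in \BR^\infty_+ \,\Big|\, \limsup_{x \to \infty} e^{-a'x} N(x) < \infty \,\Bigr\}.
\]
Applying the extended Proposition~\ref{prop:singular} with parameter $a'$ yields $\pi_{a'}(A) = 1$. Applying it with parameter $a$ yields $\liminf_{x\to\infty} e^{-ax} N(x) > 0$ $\pi_a$-a.s., so $\pi_a$-a.s.\ there is a (random) constant $c > 0$ with $e^{-a'x} N(x) \ge c \, e^{(a - a')x} \to \infty$ as $x \to \infty$ (using $a > a'$), forcing $\pi_a(A) = 0$. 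Since $A$ and $A^c$ form a measurable partition carrying all of $\pi_{a'}$ and all of $\pi_a$ respectively, $\pi_a$ and $\pi_{a'}$ are mutually singular.

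The only obstacle is the preliminary extension of Proposition~\ref{prop:singular} to the condition~\eqref{eq:drifts}; it is minor, reducing to the bound $g_1 + \ldots + g_k = O(\sqrt k)$. Once that is in hand the mutual singularity follows immediately from the difference in the exponential growth rates of $N(x)$.
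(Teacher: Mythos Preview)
Your argument is correct but takes a different route from the paper. The paper works directly with the coordinates: under $\pi_b$ the rescaled gaps $\la_k(b)Z_k$ (with $\la_k(b)=2k\ol g_k+kb$) are i.i.d.\ $\Exp(1)$, so by the strong law of large numbers
\[
\frac1n\sum_{k=1}^n\log\bigl(\la_k(a)Z_k\bigr)\longrightarrow\mu:=\ME\log Z,\quad Z\sim\Exp(1),
\]
$\pi_a$-a.s., while the same average converges $\pi_{a'}$-a.s.\ to $\mu+\log(a'/a)$ (using $\frac1n\sum_k\log\frac{\la_k(a')}{\la_k(a)}\to\log\frac{a'}{a}$, which follows from~\eqref{eq:drifts}); since the limits differ, singularity is immediate. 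Your approach instead distinguishes the measures by the exponential growth rate of the counting function $N(x)$, after first extending Proposition~\ref{prop:singular} from hypothesis~\eqref{eq:drifts-stable} to~\eqref{eq:drifts}. That extension is indeed routine---your observation that $g_1+\ldots+g_k=O(\sqrt{k})$ via Cauchy--Schwarz gives $\la_k=ka(1+O(k^{-1/2}))$, from which both needed estimates follow---and the remainder of your argument is clean. The paper's proof is slightly shorter and entirely self-contained, while yours has the advantage of tying the singularity directly to the physically meaningful density profile, and as a byproduct upgrades Proposition~\ref{prop:singular} to the natural hypothesis~\eqref{eq:drifts}.
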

\begin{proof}
Under the measure $ \pi_a $, we have that $ \frac{1}{2n\ol{g}_n+na}Z_n $, $ n=1,2,\ldots $,
are i.i.d.\ $ \Exp(1) $ variables.
For $ Z\sim\Exp(1) $, the variable $ U:=\ME(\log Z) $ is integrable (i.e. $ \ME|U|<\infty $),
so, letting $ \mu := \ME(U) $,
by the strong Law of Large Numbers we that
\begin{align}
	\label{eq:singular:a}
	\frac{1}{n} \sum_{k=1}^n \log\Big(\frac{Z_k}{2k\ol{g}_k+ka}\Big) \to \mu,
	\quad
	\pi_a\text{-a.s.},
\\
	\label{eq:singular:a'}
	\frac{1}{n} \sum_{k=1}^n \log\Big(\frac{Z_k}{2k\ol{g}_k+ka'}\Big) \to \mu,
	\quad
	\pi_{a'}\text{-a.s.}
\end{align}
Under the conditions~\eqref{eq:drifts} and $ a,a'>-2\inf_n g_n $, it is straightforward to show that
\begin{align*}
	\frac{1}{n} \sum_{k=1}^n \log\Big(\frac{2k\ol{g}_k+ka'}{2k\ol{g}_k+ka}\Big) \to \log\Big(\frac{a'}{a}\Big).
\end{align*}
Adding this to \eqref{eq:singular:a'} yields
\begin{align*}
	\frac{1}{n} \sum_{k=1}^n \log\Big(\frac{Z_k}{2k\ol{g}_k+ka}\Big) \to \mu + \log\Big(\frac{a'}{a}\Big),
	\quad
	\pi_{a'}\text{-a.s.}
\end{align*}
This, with $ a\neq a' $, concludes that $ \pi_a $ and $ \pi_{a'} $ are mutually singular.
\end{proof}


\begin{thebibliography}{alpha}
%
\bibitem[Ald03]{Aldous} \textsc{David Aldous} (2003). Unpublished, available at \url{http://www.stat.berkeley.edu/~aldous/Research/OP/river.pdf}
%
\bibitem[AA09]{AA} \textsc{Louis-Pierre Arguin, Michael Aizenman} (2009).
On the Structure of Quasi-Stationary Competing Particle Systems. 
\textit{Ann. Probab.} \textbf{37} (3), 1080-1113.
%
\bibitem[BFK05]{BFK} \textsc{Adrian D. Banner, E. Robert Fernholz, Ioannis Karatzas} (2005). Atlas Models of Equity Markets. \textit{Ann. Appl. Probab.} \textbf{15} (4), 2996-2330.
%
\bibitem[BG08]{BG2008} \textsc{Adrian D. Banner, Raouf Ghomrasni} (2008). Local Times of Ranked Continuous Semimartingales. \textit{Stoch. Proc. Appl.} \textbf{118} (7), 1244-1253.
%
\bibitem[BFIKP11]{5people} \textsc{Adrian D. Banner, E. Robert Fernholz, Tomoyuki Ichiba, Ioannis Karatzas, Vassilios Papathanakos} (2011). Hybrid Atlas Models. \textit{Ann. Appl. Probab.} \textbf{21} (2), 609-644.  
%
\bibitem[BP87]{BassPardoux} \textsc{Richard Bass, E. Pardoux} (1987). Uniqueness for Diffusions with Piecewise Constant Coefficients. \textit{Probab. Th. Rel. Fields} \textbf{76}, 557-572.
%
\bibitem[BS15]{MyOwn5} \textsc{Cameron Bruggeman, Andrey Sarantsev} (2015). Multiple Collisions in Systems of Competing Brownian Particles. To appear in \textit{Bernoulli}. Available at arXiv:1309.2621.
%
\bibitem[CP10]{CP2010} \textsc{Sourav Chatterjee, Soumik Pal} (2010). 
A Phase-Transition Behavior for Brownian Motions Interacting Through Their Ranks. \textit{Probab. Th. Rel. Fields} \textbf{147} (1-2), 123-159. 
%
\bibitem[DSVZ16]{4people} \textsc{Amir Dembo, Mykhaylo Shkolnikov, S.R. Srinivasa Varadhan, Ofer Zeitouni} (2016). Large Deviations for Diffusions Interacting Through Their Ranks. \textit{Comm. Pure Appl. Math.} \textbf{69} (7), 1259-1313. 
%
\bibitem[DT15]{DemboTsai} \textsc{Amir Dembo, Li-Cheng Tsai} (2015). Equilibrium Fluctuations of the Atlas Model. Available at arXiv:1503.03581.
%
\bibitem[Fel68]{FellerBook} \textsc{William Feller} (1968). \textit{An Introduction to Probability Theory and Its Applications.} {Vol. 1}, Wiley. 
%
\bibitem[FIK13]{2order} \textsc{E. Robert Fernholz, Tomoyuki Ichiba, Ioannis Karatzas} (2013). A Second-Order Stock Market Model. \textit{Ann. Finance} \textbf{9} (3), 439-454. 
%
\bibitem[FIKP13]{FIKP2013} \textsc{E. Robert Fernholz, Tomoyuki Ichiba, Ioannis Karatzas, Vilmos Prokaj} (2013). Planar Diffusions with Rank-Based Characteristics and Perturbed Tanaka Equations. \textit{Probab. Th. Rel. Fields}, \textbf{156} (1-2), 343-374. 
%
\bibitem[FIK13]{Elastic} \textsc{E. Robert Fernholz, Tomoyuki Ichiba, Ioannis Karatzas} (2013). Two Brownian Particles with Rank-Based Characteristics and Skew-Elastic Collisions. \textit{Stoch. Proc. Appl.} \textbf{123} (8), 2999-3026.
%
\bibitem[FK09]{FernholzKaratzasSurvey} \textsc{E. Robert Fernholz, Ioannis Karatzas} (2009) Stochastic Portfolio Theory: An Overview. \textit{Handbook of Numerical Analysis: Mathematical Modeling and Numerical Methods in Finance}, 89-168. Elsevier.
%
\bibitem[IK10]{IK2010} \textsc{Tomoyuki Ichiba, Ioannis Karatzas} (2010). On Collisions of Brownian Particles. \textit{Ann. Appl. Probab.} \textbf{20} (3), 951-977. 
%
\bibitem[IKP13]{IKP2013} \textsc{Tomoyuki Ichiba, Ioannis Karatzas, Vilmos Prokaj} (2013). Diffusions with Rank-Based Characteristics and Values in the Nonnegative Quadrant. \textit{Bernoulli} \textbf{19} (5B), 2455-2493.
%
\bibitem[IKS13]{IKS2013} \textsc{Tomoyuki Ichiba, Ioannis Karatzas, Mykhaylo Shkolnikov} (2013). Strong Solutions of Stochastic Equations with Rank-Based Coefficients. \textit{Probab. Th. Rel. Fields} \textbf{156}, 229-248. 
%
\bibitem[IPS13]{IPS2013} \textsc{Tomoyuki Ichiba, Soumik Pal, Mykhaylo Shkolnikov} (2013). Convergence Rates for Rank-Based Models with Applications to Portfolio Theory. 
\textit{Probab. Th. Rel. Fields} \textbf{156}, 415-448.
%
\bibitem[JM08]{JM2008} \textsc{Benjamin Jourdain, Florent Malrieu} (2008). Propagation of Chaos and Poincare Inequalities for a System of Particles Interacting Through Their cdf. \textit{Ann. Appl. Probab.} \textbf{18} (5), 1706-1736.
%
\bibitem[JR13]{JR2013a} \textsc{Benjamin Jourdain, Julien Reygner} (2013). Propagation of Chaos for Rank-Based Interacting Diffusions and Long Time Behaviour of a Scalar Quasilinear Parabolic Equation. \textit{SPDE Anal. Comp.} \textbf{1} (3), 455-506.
%
\bibitem[JR15]{JR2013b} \textsc{Benjamin Jourdain, Julien Reygner} (2015). Capital Distribution and Portfolio Performance in the Mean-Field Atlas Model. \textit{Ann. Finance} \textbf{11} (2), 151-198. 
%
\bibitem[JR14]{JR2014} \textsc{Benjamin Jourdain, Julien Reygner} (2014). The Small Noise Limit of Order-Based Diffusion Processes. \textit{Electr. J. Probab.} \textbf{19} (29), 1-36. 
%
\bibitem[KPS12]{KPS2012} \textsc{Ioannis Karatzas, Soumik Pal, Mykhaylo Shkolnikov} (2016). Systems of Brownian Particles with Asymmetric Collisions. \textit{Ann. Inst. H. Poincare} \textbf{52} (1), 323-354.  
%
\bibitem[KS16]{MyOwn4} \textsc{Ioannis Karatzas, Andrey Sarantsev} (2016). Diverse Market Models of Competing Brownian Particles with Splits and Mergers. \textit{Ann. Appl. Probab.} \textbf{26} (3), 1329-1361.
%
\bibitem[PP08]{PP2008} \textsc{Soumik Pal, Jim Pitman} (2008). One-Dimensional Brownian Particle Systems with Rank-Dependent Drifts. \textit{Ann. Appl. Probab.} \textbf{18} (6), 2179-2207. 
%
\bibitem[Rey15]{Reygner} \textsc{Julien Reygner} (2015). Chaoticity of the Stationary Distribution of Rank-Based Interacting Diffusions. \textit{Electr. Comm. Probab.} \textbf{20} (60), 1-20. 
%
\bibitem[RA05]{RA} \textsc{Anastasia Ruzmaikina, Michael Aizenman} (2005). Characterization of Invariant Measures at the Leading Edge for Competing Particle Systems. \textit{Ann. Probab.} \textbf{33} (1), 82-113.
%
\bibitem[Sar15]{MyOwn2} \textsc{Andrey Sarantsev} (2015). Comparison Techniques for Competing Brownian Particles. To appear in \textit{J. Th. Probab.} Available at arXiv:1305.1653.
%
\bibitem[Sar15a]{MyOwn10} \textsc{Andrey Sarantsev} (2015). Reflected Brownian Motion in a Convex Polyhedral Cone: Tail Estimates for the Stationary Distribution. To appear in \textit{J. Th. Probab.} Available at arXiv:1509.01781.
%
\bibitem[Sar15b]{MyOwn3} \textsc{Andrey Sarantsev} (2015). Triple and Simultaneous Collisions of Competing Brownian Particles. \textit{Electr. J. Probab.} \textbf{20} (29), 1-28. 
%
\bibitem[Sar15c]{MyOwn11} \textsc{Andrey Sarantsev} (2015). Two-Sided Infinite Systems of Competing Brownian Particles. Available at arXiv:1509.01859.
%
\bibitem[Sar16]{MyOwn12} \textsc{Andrey Sarantsev} (2016). Explicit Rates of Exponential Convergence for Reflected Jump-Diffusions on the Half-Line. Available at arXiv:1509.01783. 
%
\bibitem[Sar16a]{MyOwn6} \textsc{Andrey Sarantsev} (2016). Infinite Systems of Competing Brownian Particles. To appear in \textit{Ann. Inst. H. Poincare.} Available at arXiv:1403.4229.
%
\bibitem[Shk11]{S2011} \textsc{Mykhaylo Shkolnikov} (2011). Competing Particle Systems Evolving by Interacting L\'evy Processes. \textit{Ann. Appl. Probab.} \textbf{21} (5), 1911-1932. 
%
\bibitem[Shk12]{S2012} \textsc{Mykhaylo Shkolnikov} (2012). Large Systems of Diffusions Interacting Through Their Ranks. \textit{Stoch. Proc. Appl.} \textbf{122} (4), 1730-1747.
%
\bibitem[Str11]{StroockBook} \textsc{Daniel W. Stroock} (2011). \textit{Probability Theory. An Analytic View}. Cambridge University Press. 
%
\bibitem[Wil95]{Williams} \textsc{Ruth J. Williams} (1995). \textit{Semimartingale Reflecting Brownian Motions in the Orthant.} \textit{Stochastic networks}, IMA Vol. Math. Appl. \textbf{71}, 125-137. Springer-Verlag. 
%
\end{thebibliography}
\end{document}